\numberwithin{equation}{section}
\numberwithin{table}{subsection}
\numberwithin{algorithm}{section}
\newtheorem{theorem}{Theorem}[section]
\newtheorem{lemma}[theorem]{Lemma}    
\newtheorem{Definition}[theorem]{Definition}
\newenvironment{proof}{{\noindent\it Proof}\quad}{\hfill $\square$\par}
\begin{document}

\title{A preconditioned iteration method for solving saddle point problems\footnote{\noindent
  }}
 \author{Juan Zhang\thanks{Corresponding author: zhangjuan@xtu.edu.cn}, Yiyi Luo\thanks{Email: l19118069216@163.com}\\Hunan Key Laboratory for Computation and Simulation in Science and Engineering, \\Key Laboratory of Intelligent Computing and Information Processing of Ministry of Education,\\School of Mathematics and Computational Science, Xiangtan University}
\date{}

\maketitle

\noindent	{\small{\bf Abstract.}
	 This paper introduces a preconditioned method designed to comprehensively address the saddle point system with the aim of improving convergence efficiency. In the preprocessor construction phase, a technical approach for solving the approximate inverse matrix of sparse matrices is presented. The effectiveness of the proposed method is demonstrated through numerical examples, emphasizing its efficacy in approximating the inverse matrix. Furthermore, the preprocessing technology includes a low-rank processing step, effectively reducing algorithmic complexity. Numerical experiments validate the effectiveness and feasibility of PSLR-GMRES in solving the saddle point system.
		
\noindent	{\small{\bf Keywords.}
	Saddle Point problems, Preprocessing, Arnoldi algorithm, Low rank correction technique, Approximate inverse matrix of Schur complement.

\section{Introduction}
Consider the following general saddle point linear system 
\begin{equation}\label{eq01}
	\begin{bmatrix}
		A^TA&B^T\\
		\pm B&C
	\end{bmatrix}\begin{bmatrix}
		x\\y
	\end{bmatrix}=\begin{bmatrix}
		f\\g
	\end{bmatrix},\end{equation} where $A\in \mathbb{R}^{m\times n}$ is a non-singular matrix, that is, $A^TA$ is non-singular\cite{ref16},$B\in \mathbb{R}^{m\times n}$, $x,f\in \mathbb{R}^n$, and $y,g\in \mathbb{R}^m$ with $m\leqslant n$, $C\in \mathbb{R}^{m\times m}$ is symmetrically definite. Such structured linear equation systems are widely used in many practical problems, such as quadratic optimization problems with equality constraints, computational fluid dynamics, and meshless methods. When dealing with a large-scale saddle-point problem \eqref{eq01}, iterative methods, such as Uzawa method\cite{ref1}, HSS iterative method\cite{ref2}, and Preconditioned Conjugate Residuals (PCR) method\cite{ref3}, Preconditioned Conjugate Gradient (PCG) method\cite{ref38, ref43} and so on are commonly employed for its solution.
 
However, the linear system encounters issues such as slow convergence speed and long CPU solving time. To address these problems, a good preconditioner is typically constructed based on specific matrix structures to reduce computational workload and accelerate convergence. The basic idea of preconditioning is to transform the coefficient matrix of the original linear system into a more easily solvable linear system, thereby accelerating the solution process. Preconditioning methods are typically used in iterative algorithms to improve their convergence performance by applying auxiliary operations at each iteration step. Currently, there are several methods for preconditioning, including incomplete LU decomposition preconditioning (ILU preconditioning)\cite{ref34,ref42}, incomplete Cholesky decomposition (ICF)\cite{ref40,ref41}, block preconditioning\cite{ref6},  etc. As researchers delve deeper into this field, variations of these preconditioning methods have also emerged. For example, block diagonal preconditioning involves using a block diagonal matrix as the preconditioner, and scaling parameters are added to the block matrix to adjust the stability of the preconditioner, resulting in faster convergence speed \cite{ref20}. When solving the KKT form of saddle-point systems, the SLRU method \cite{ref22} is used to obtain an approximate inverse matrix of $A$. Then, the Sherman-Morrison-Woodbury formula \cite{ref12} is applied specifically to solve for the inverse of the Schur Complement $S^{-1}$
  \cite{ref21}. When preprocessing saddle point systems, it is crucial to construct an approximate Schur complement matrix or an approximate inverse of the Schur complement matrix. Obtaining a good approximation of the Schur complement matrix will enable the saddle point system's preprocessor to achieve better performance and more accurate convergence speed. As a result, an increasing number of methods have emerged to address this issue. There are many methods for constructing approximate Schur complements. For example, there is an efficient technique for obtaining a high-quality approximation of the Schur complement matrix in the preconditioning method for finite element discretization of elliptic boundary value problems. The Schur complement is based on a two-by-two block matrix decomposition and is computed through assembling local (macro-element) Schur complements to enhance numerical efficiency\cite{ref23}. Furthermore, there is a high-quality sparse approximation method for constructing second-order block-structured matrices under finite element discretization, especially for the Schur complement of matrices obtained from the discretization of partial differential equations\cite{ref26}. Additionally, a preconditioning method, known as the element-by-element approximate Schur complement technique, has been developed for the grid-based non-discrete piezoelectric equation. It has been proven that this method can accelerate the convergence of Krylov subspace iterative methods, thereby significantly improving the efficiency of numerical calculations\cite{ref27}.
Although these methods have been well developed, there is little mention of the method of solving the inverse of $S$ so one of the motivations of this paper is to propose an approximate method for solving the inverse matrix of the Schur complement.

In addition to the aforementioned commonly used methods, many other preprocessing methods have gradually emerged. 
Xiaojun Chen and Kouji Hashimoto proposed and analyzed a simple method in 2003 to numerically verify the accuracy of approximate solutions to saddle point matrix equations, which only requires the iterative solutions of two symmetric positive definite linear systems .
Michele Benzi and Gene Howard Golub investigated the problem of solving saddle point linear systems using preconditioned Krylov subspace methods in 2004. They proposed a preconditioning strategy based on the symmetric/skew-symmetric splitting of the coefficient matrix and established useful properties of the preconditioning matrix .
Yang Cao, Zhiru Ren, and Linquan Yao developed an improved Relaxed Positive Semi-Definite and Hermitian Splitting (IRPSS) preconditioner for solving saddle point problems in 2019. These preconditioners are easier to implement than the Relaxed Positive Semi-Definite and Hermitian Splitting (RPSS) preconditioners \cite{ref10}.
Rui Li and Zeng Qi Wang proposed a restricted preconditioned conjugate gradient method for solving saddle point systems in 2021 \cite{ref4}.
Nana Wang and Jicheng Li introduced an exact parameterized block symmetric positive definite preconditioner for solving $3\times3$ block saddle point problems and its inexact preconditioned version in 2022. They also analyzed their eigenvalues and investigated the selection of (approximate) optimal parameters in the aforementioned inexact preconditioners \cite{ref11}. However, these methods have their limitations. For instance, some methods may be dependent on the choice of constraints when analyzing convergence properties and computing solution times. Additionally, the effectiveness of specific preconditioning techniques may vary depending on the characteristics of the problem, requiring a strong background in specialized knowledge. Therefore, these methods are not very suitable for our saddle point system.

With the development of saddle point matrix preprocessing, many researchers have studied approximate inverse preconditioners based on low-rank approximations. For instance, Ruipeng Li, Yuanzhe Xi, and Yousef Saad introduced the Schur Complement Low-Rank (SLR) preconditioner in \cite{ref15}. Its objective is to minimize the rank difference between the Schur complement and the inverse of the submatrix equation \eqref{PSLRflow}. Multi-level extensions of SLR, such as Multi-Level Single Reverse (MSLR) \cite{ref18} and Generalized MSLR (GMSLR) \cite{ref19}, address the issue of the Schur complement itself being potentially very large for 3D problems. They exploit the block-diagonal structure of the Schur complement to further reduce computational costs. In MSLR and GMSLR preconditioners, hierarchical interface decomposition (HID) is performed using nested dissection algorithms.  These preconditioners utilize different low-rank corrections to approximate the Schur complement or its inverse. These methods, which can be considered as approximate inverse techniques, are often effective for solving indefinite linear systems. However, they also face certain challenges. For instance, controlling the size of the interface during the construction phase is difficult when employing nested dissection.

 One of the main objectives of this paper is to propose an alternative approach that avoids nested dissection. We introduce a method that combines a low-rank approximation technique with a simple Neumann polynomial expansion technique (\cite{ref39}, Section 12.3.1). This approach, known as the Power Series Low-Rank Correction (PSLR) preconditioner, aims to enhance robustness. Applying Neumann polynomial preprocessing techniques directly to the Schur complement 
$S$  involves approximating 
$(\omega S)^{-1}$
  through polynomial expansion of the 
$m$ term, as detailed in section 12.3.1 of \cite{ref39}. This can be expressed as
\begin{equation}\label{eq:classic_power_series}
\frac{1}{\omega}\left[I+N+N^2+\cdots+N^m\right]D^{-1},
\end{equation}
where $\omega$  represents a scaling parameter, 
$D$ denotes the (block) diagonal of $S$. However, the approach presented in Equation \eqref{eq:classic_power_series} also exhibits certain limitations. For instance, determining an optimal value for the parameter $\omega$ can be challenging. Additionally, the convergence of the matrix series in  \eqref{eq:classic_power_series} is only ensured when $\rho(N) < 1$, and the accuracy of the approximation improves with increasing $m$ only under specific conditions that may not apply to general matrices. Furthermore, even when 
$\rho(N) < 1$, \eqref{eq:classic_power_series} offers only a coarse approximation of 
$S^{-1}$ for small values of $m$, while utilizing a large $m$ can lead to significant computational cost.

The PSLR preconditioner seamlessly combines power series expansion with certain low-rank correction techniques to overcome these drawbacks. On one hand, this preconditioning technique enhances robustness. When $\rho(N)>1$, the classical Neumann series defined by \eqref{eq:classic_power_series} diverges, and the approximation accuracy deteriorates as $m$ increases. However, this issue can be addressed by employing low-rank correction techniques. Specifically, we utilize low-rank correction as a form of deflation to shift the eigenvalues of $N$ with modulus greater than 1 closer to zero. The objective is to ensure the convergence of the series \eqref{eq:classic_power_series} towards the inverse matrix of the Schur complement. On the other hand, this method enhances decay properties. The performance of previously developed methods, such as SLR, MSLR, and GMSLR, relies on the decay properties of eigenvalues associated with the inverse of the Schur complement, $S^{-1}$. If the decay rate is slow, these preconditioners become ineffective. Furthermore, the PSLR preconditioner allows for controlling the decay rate of matrix eigenvalues by adjusting the number of terms $m$ in the power series expansion. This can significantly improve performance. Additionally, PSLR is effective in handling general sparse problems. Unlike SLR and MSLR, it is not limited to symmetric systems. Numerical experiments demonstrate that the PSLR preconditioner outperforms other methods for solving linear equation systems in various tests.

The structure of this paper is as follows. First, in Section \ref{solveSchur}, we introduce the Power Series Schur Complement Low-Rank (PSLR) method and the specific approach for solving saddle point matrices. Then, in Section \ref{sec
	:converge}, we analyze the convergence and complexity of the PSLR method with respect to the power series expansion. Finally, in Section \ref{sec:experiments}, we perform tests on the algorithm.

\section{Overview of PSLR pretreatment methods}\label{solveSchur}
The PSLR (Power Schur Complement Low-rank) \cite{ref6} preconditioning depends on the structure of the matrix, and requires the use of the approximate inverse of the saddle point matrix after a congruent transformation. To better distinguish the form of the saddle point matrix, we make the following definition.

\begin{Definition}
	Let $A\in \mathbb{R}^{m\times n}$ is a non-singular matrix, $B\in \mathbb{R}^{m\times n}$, $C\in \mathbb{R}^{m\times m}$is a singular matrix, then the matrix $\begin{bmatrix}
		A^TA&B^T\\-B&C\end{bmatrix}$ is called a positive saddle point matrix, and the matrix $\begin{bmatrix}
		A^TA&B^T\\B&-C
	\end{bmatrix}$ is called a negative saddle point matrix.
\end{Definition}

\cite{ref6}We first perform elementary transformations on the positive saddle point matrix (the same applies to the negative saddle point matrix) to simplify the structure of the matrix

\begin{equation}
	\overbrace{\begin{bmatrix}
			I&0\\B(A^TA)^{-1}&I
	\end{bmatrix}}^{\mathcal{P}}\overbrace{\begin{bmatrix}
			A^TA&B^T\\-B&C
	\end{bmatrix}}^{\mathcal{A}}\overbrace{\begin{bmatrix}
			I&-(A^TA)^{-1}B^T\\0&I
	\end{bmatrix}}^{\mathcal{Q}}
	=\overbrace{\begin{bmatrix}
			A^TA&0\\0&S\end{bmatrix}}^{\mathcal{H}}.\end{equation}Where $S=C+B(A^TA)^{-1}B^T$ is a negative Schur complement of $\mathcal{A}$.

Then the coefficient matrix can be converted to
$\mathcal{H}=\mathcal{P}\mathcal{A}\mathcal{Q}.$ The saddle point system can be converted into an equivalent linear system,\begin{equation}\mathcal{P}\mathcal{A}\mathcal{Q}\mathcal{Q}^{-1}z=\mathcal{P}b,\end{equation} 
where $\mathcal{P},\mathcal{Q}$ is the upper and lower triangular matrix with diagonal element 1, and b is the vector at the right end of the linear system.

If $\mathcal{Q}^{-1}z=\hat{z},\mathcal{P}b=\hat{b},\mathcal{P}\mathcal{A}\mathcal{Q}=\hat{\mathcal{A}}$,
we have $\hat{\mathcal{A}}\hat{z}=\hat{b}$, namely
\begin{equation}\label{eq10}\begin{bmatrix}
		A^TA&0\\0&S
	\end{bmatrix}\begin{bmatrix}
		x+(A^TA)^{-1}B^Ty\\y
	\end{bmatrix}=\begin{bmatrix}
		f\\B(A^TA)^{-1}g
	\end{bmatrix}.\end{equation}

For the expansion of equation \eqref{eq10}, there are\begin{equation}\label{PSLRflow}
	\begin{cases}
		A^TAx+B^Ty=f\\
		Sy=B(A^TA)^{-1}g
	\end{cases}. \end{equation}
In equation \eqref{PSLRflow}, the second equation can be solved by first solving for $y$ and then substituting into the first equation. The solution for $y$ is given by 
\begin{equation}\label{eq12}
	y=S^{-1}B(A^TA)^{-1}g.
\end{equation} The key to solving the system of linear equations is to compute Schur's approximate inverse.

The power series expansion presented in this section applies to matrices split into the sum (difference) of a unit matrix and another matrix, rather than to the matrix itself. Since there are many forms of saddle point systems, without loss of generality, we only discuss the positive saddle point system in this section. 

First, the Schur complement $S$ can be written as the sum of two  different matrices 
$$
S=C+B(A^TA)^{-1}B^T=C(I+C^{-1}B(A^TA)^{-1}B^T).$$
If there exists a real matrix $A$, then $$(I-A)^{-1}=I+A+A^2+\cdots=\sum_{i=0}^{\infty}A^i.$$ 
Therefore$$S^{-1}=(I+C^{-1}B(A^TA)^{-1}B^T)^{-1}C^{-1}=\sum_{i=0}^{\infty}(-C^{-1}B(A^TA)^{-1}B^T)^iC^{-1}.$$ Thus, we can transform the second equation of \eqref{PSLRflow} into

\begin{equation}\label{eq11}\begin{aligned}
		y&=S^{-1}B(A^TA)^{-1}g\\
		&=(I+C^{-1}B(A^TA)^{-1}B^T)^{-1}C^{-1}B(A^TA)^{-1}g\\
		&=\sum_{i=0}^{\infty}(-C^{-1}B(A^TA)^{-1}B^T)^iC^{-1}B(A^TA)^{-1}g\\
		&\approx\sum_{i=0}^{m}(-1)^i(C^{-1}B(A^TA)^{-1}B^T)^iC^{-1}B(A^TA)^{-1}g,
\end{aligned}\end{equation}where $m$ is the specific number of terms in the power series expansion.

At this stage, we can obtain the approximate inverse matrix of matrix $S$ by performing multiplication calculations between matrices $A$ and $B$, as well as with the column vector $g$. We can then use this approximate value as the initial value for the first step of iteration, and subsequently apply the GMRES method to achieve a faster convergent numerical solution.

From this, it can be seen from Equation \eqref{eq11} that to solve for $y$, it is necessary to determine the number of terms in the power series expansion of the inverse matrix of $S$. In \eqref{eq11}, we can use the method of solving the approximate inverse matrix mentioned above to solve the saddle point system.

Let $M=-C^{-1}B(A^TA)^{-1}B^T$, $S=C(I-M)$. In the following text, $M$ is the same. then
\begin{equation}\label{eq002}
	S^{-1}=\sum_{i=0}^{\infty}M^{i}C^{-1}=\sum_{i=0}^{m}M^iC^{-1}+\sum_{i=m+1}^{\infty}M^iC^{-1}.
\end{equation}
Multiplying both sides of the equation by $S$, we have $$I=S\sum_{i=0}^{m}M^iC^{-1}+S\sum_{i=m+1}^{\infty}M^iC^{-1}.$$
If $E_{rr}(m)=S\sum_{i=m+1}^{\infty}M^iC^{-1}$, we can get \begin{equation}\begin{aligned}\label{eq004}
		E_{rr}(m)=&I-[S\sum_{i=0}^{m}M^iC^{-1}]\\
		=&I-C(I-M)(\sum_{i=0}^{m}M^i)\\
		=&I-C(\sum_{i=0}^{m}M^iC^{-1}-\sum_{i=0}^{m}M^{i+1}C^{-1})\\
		=&I-C(C^{-1}-M^{m+1}C^{-1})\\
		=&CM^{m+1}C^{-1}.
\end{aligned}\end{equation}
Therefore, we have \begin{equation}\label{eq003}
	S^{-1}=\sum_{i=0}^{m}M^iC^{-1}(I-E_{rr}(m))^{-1}=
	\sum_{i=0}^{m}M^iC^{-1}(I-CM^{m+1}C^{-1})^{-1}.
\end{equation}

\subsection{Low-rank Matrix Correction}\label{lowrank}
Next, we need to use the Arnoldi algorithm to approximate $E_{rr}(m)$. For the Arnoldi algorithm, we can achieve low-rank matrix by controlling the order of $H_{r_k}$. Let $V_{r_k}\in \mathbb{R}^{n\times r_k}$ and $H_{r_k}\in \mathbb{R}^{r_k\times r_k}$, then we have $$H_{r_k}=V_{r_k}^TAV_{r_k}.$$

At this point, $H_{r_k}$ is an $r_k$-order matrix $\rm{(}r_k\ll n\rm{)}$, and we have reduced the order of the matrix.Using the Arnoldi algorithm, we have $E_{rr}(m)\approx V_{r_{k}}H_{r_{k}}V_{r_{k}}^T.$
Then we can get \begin{equation}\label{002}\begin{aligned}
		S^{-1}&\approx[\sum_{i=0}^{m}M^iC^{-1}](I-V_{r_{k}}H_{r_{k}}V_{r_{k}}^T)^{-1}\\
		&=[\sum_{i=0}^{m}M^iC^{-1}](V_{r_{k}}V_{r_{k}}^T-V_{r_{k}}H_{r_{k}}V_{r_{k}}^T)^{-1}\\
		&=[\sum_{i=0}^{m_1}M^iC^{-1}](V_{r_{k}}(I-H_{r_{k}})^{-1}V_{r_{k}}^T)\\
		&=[\sum_{i=0}^{m}M^iC^{-1}](I+V_{r_{k}}(I-H_{r_{k}})^{-1}V_{r_{k}}^T-I)\\		
		&=[\sum_{i=0}^{m}M^iC^{-1}](I+V_{r_{k}}(I-H_{r_{k}})^{-1}V_{r_{k}}^T-V_{r_{k}}V_{r_{k}}^T)\\
		&=[\sum_{i=0}^{m}M^iC^{-1}](I+V_{r_{k}}[(I-H_{r_{k}})^{-1}-I]V_{r_{k}}^T)\\
		&=[\sum_{i=0}^{m}M^iC^{-1}](I+V_{r_{k}}G_{r_{k}}V_{r_{k}}^T).
\end{aligned}\end{equation}
where $G_{r_{k}}=(I-H_{r_k})^{-1}-I$.

According to the above analysis, when $E_{rr}(m)$ is determined, i.e., the number of terms of the power series expansion is determined, the saddle point linear system is also determined.

However, when performing the correction of low-rank matrices, the algorithm directly truncates the matrix from a certain row or column, which results in a large approximation error. Therefore, we use the matrix $\left\|V_{r_k}H_{r_k}V_{r_k}^T-E_{rr}(m)\right\|$ to judge the quality of the approximation.

In addition, due to some precision loss in Arnoldi process, it is also impossible to obtain an accurate solution when solving $S^{-1}$. Next, we will study the factors that affect the accuracy of $S^{-1}$.

\begin{theorem}
	For any matrix norm $\|\cdot\|$, the approximate accuracy of the $S^{-1}$ application program satisfies the following inequality,
	\begin{equation}
		\frac{\left\|S^{-1}-S_{app}^{-1}\right\|}{\left\|S^{-1}\right\|}
		\leqslant\left\|X(m,r_k)\right\|\left\|Z(r_k)^{-1}\right\|,\end{equation}
	where
	$$X(m,r_k)=E_{rr}(m)-V_{r_k}H_{r_k}V_{r_k}^T,Z(r_k)=I-V_{r_k}H_{r_k}V_{r_k}^T.$$
\end{theorem}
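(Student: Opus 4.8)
The plan is to express both exact and approximate inverses of the Schur complement in the common factored form derived earlier and then bound the difference through a standard resolvent-type identity. From \eqref{eq003} we have the exact identity
\[
S^{-1}=\Bigl[\sum_{i=0}^{m}M^iC^{-1}\Bigr]\bigl(I-E_{rr}(m)\bigr)^{-1},
\]
while the low-rank approximation \eqref{002} replaces $E_{rr}(m)$ by its Arnoldi surrogate $V_{r_k}H_{r_k}V_{r_k}^T$, giving
\[
S_{app}^{-1}=\Bigl[\sum_{i=0}^{m}M^iC^{-1}\Bigr]\bigl(I-V_{r_k}H_{r_k}V_{r_k}^T\bigr)^{-1}
=\Bigl[\sum_{i=0}^{m}M^iC^{-1}\Bigr]Z(r_k)^{-1}.
\]
Thus $S^{-1}-S_{app}^{-1}=\bigl[\sum_{i=0}^{m}M^iC^{-1}\bigr]\bigl[(I-E_{rr}(m))^{-1}-Z(r_k)^{-1}\bigr]$, and the whole estimate reduces to controlling the bracketed difference of two inverses.

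Next I would apply the resolvent identity $P^{-1}-Q^{-1}=Q^{-1}(Q-P)P^{-1}$ with $P=I-E_{rr}(m)$ and $Q=Z(r_k)=I-V_{r_k}H_{r_k}V_{r_k}^T$, so that $Q-P=E_{rr}(m)-V_{r_k}H_{r_k}V_{r_k}^T=X(m,r_k)$. This yields
\[
(I-E_{rr}(m))^{-1}-Z(r_k)^{-1}=Z(r_k)^{-1}\,X(m,r_k)\,(I-E_{rr}(m))^{-1}.
\]
Multiplying on the left by $\sum_{i=0}^{m}M^iC^{-1}$ and recognizing that this same factor times $(I-E_{rr}(m))^{-1}$ is exactly $S^{-1}$ by \eqref{eq003}, we obtain the clean factorization
\[
S^{-1}-S_{app}^{-1}=Z(r_k)^{-1}\,X(m,r_k)\,S^{-1},
\]
where one must be slightly careful about the order of the factors; if $\sum_{i=0}^{m}M^iC^{-1}$ does not commute with $Z(r_k)^{-1}$ one keeps $S^{-1}-S_{app}^{-1}=\bigl[\sum_{i=0}^m M^iC^{-1}\bigr]Z(r_k)^{-1}X(m,r_k)(I-E_{rr}(m))^{-1}$ and instead regroups as $S_{app}^{-1}\,X(m,r_k)\,S^{-1}$ via the mirror resolvent identity $P^{-1}-Q^{-1}=P^{-1}(Q-P)Q^{-1}$. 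Taking any submultiplicative matrix norm $\|\cdot\|$ then gives $\|S^{-1}-S_{app}^{-1}\|\le \|Z(r_k)^{-1}\|\,\|X(m,r_k)\|\,\|S^{-1}\|$, and dividing through by $\|S^{-1}\|$ produces the claimed inequality.

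The only genuinely delicate point is the ordering/commutation issue flagged above: the theorem as stated has $\|X(m,r_k)\|$ and $\|Z(r_k)^{-1}\|$ on the right but no $\|S^{-1}\|$-independent factor, so one must choose the resolvent identity whose leftover factor is precisely $S^{-1}$ (or $S_{app}^{-1}$, together with a bound $\|S_{app}^{-1}\|\le\|S^{-1}\|$ that would itself need justification). Picking $P^{-1}(Q-P)Q^{-1}$ with $P=I-E_{rr}(m)$, $Q=Z(r_k)$ and left-multiplying by $\sum_{i=0}^m M^iC^{-1}$ lands on $S^{-1}\,X(m,r_k)\,Z(r_k)^{-1}$, which after norms gives exactly $\|S^{-1}\|\,\|X(m,r_k)\|\,\|Z(r_k)^{-1}\|$; dividing by $\|S^{-1}\|$ is then immediate. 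Everything else is just submultiplicativity of the norm and the two algebraic identities from \eqref{eq003} and \eqref{002}, so I expect the proof to be short once the correct identity is selected.
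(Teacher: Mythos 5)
Your proposal is correct and follows essentially the same route as the paper: the paper also writes $S^{-1}-S_{app}^{-1}=\bigl[\sum_{i=0}^{m}M^iC^{-1}\bigr]\bigl[(Z(r_k)-X(m,r_k))^{-1}-Z(r_k)^{-1}\bigr]$ and applies exactly the resolvent identity $P^{-1}-Q^{-1}=P^{-1}(Q-P)Q^{-1}$ that you settle on, arriving at $S^{-1}-S_{app}^{-1}=S^{-1}X(m,r_k)Z(r_k)^{-1}$ before taking norms. The ordering issue you flag is real, and your final choice of identity is the one the paper uses.
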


\begin{proof}Due to\begin{equation}S^{-1}=\left[\sum_{i=0}^mM^iC^{-1}\right](I-E_{rr}(m))^{-1},\end{equation}
	\begin{equation}S_{app}^{-1}=\left[\sum_{i=0}^mM^iC^{-1}\right](I-V_{r_k}H_{r_k}V_{r_k}^T)^{-1},\end{equation}
	where $M=-C^{-1}B(A^TA)^{-1}B^T$,
	we have \begin{equation}\begin{aligned}
		S^{-1}-S_{app}^{-1}&=\left[\sum_{i=0}^mM^iC^{-1}\right]\left[(I-E_{rr}(m))^{-1}-(I-V_{r_k}H_{r_k}V_{r_k}^T)^{-1}\right]\\
		&=\left[\sum_{i=0}^mM^iC^{-1}\right]\left[(Z(r_k)-X(m,r_k))^{-1}-Z(r_k)^{-1}\right]\\
		&=\left[\sum_{i=0}^mM^iC^{-1}\right](Z(r_k)-X(m,r_k))^{-1}\left[I-(Z(r_k)-X(m,r_k))Z(r_k)^{-1}\right]\\
		&=\left[\sum_{i=0}^mM^iC^{-1}\right](Z(r_k)-X(m,r_k))^{-1}(Z(r_k)-(Z(r_k)-
		X(m,r_k)))Z(r_k)^{-1}\\
		&=\left[\sum_{i=0}^mM^iC^{-1}\right](I-E_{rr}(m))^{-1}X(m,r_k)Z(r_k)^{-1}\\
		&=S^{-1}X(m,r_k)Z(r_k)^{-1}.
	\end{aligned}\end{equation}
	Taking the matrix norm on both sides of the equation, we have
	\begin{equation}\left\|S^{-1}-S_{app}^{-1}\right\|\leqslant\left\|S^{-1}\right\|\left\|X(m,r_k)\right\|\left\|Z(r_k)^{-1}\right\|.\end{equation}
	
	From the above theorem, we can see that the approximate accuracy of $S^{-1}$ is related to $X(m,r_k)$ and $Z(r_k)$, i.e., They affect the accuracy of the solution. 
\end{proof}

Thus, we study the distribution of eigenvalues to demonstrate the difference between performing low-rank processing and not performing low-rank processing. This is done to illustrate the necessity of low-rank processing. We explore the relationship between the eigenvalues of matrices expanded with same numbers of terms in the power series and the value 1. We found a sparse symmetric matrix $A^TA$ of order 494 in the matrix market to construct the upper-left block of the saddle point matrix. The remaining block matrix $B$ is a 494-order random matrix used to construct the upper-right and lower-left matrix blocks. The lower-right elements are the identity matrix of the same order as $B$. When the power series is expanded to 5 terms, the eigenvalue distribution maps generated using low-rank and non-low-rank techniques are as shown in Fig. \ref{fig1}.

\begin{figure}[H]
	\centering
	\includegraphics[width=1\linewidth]{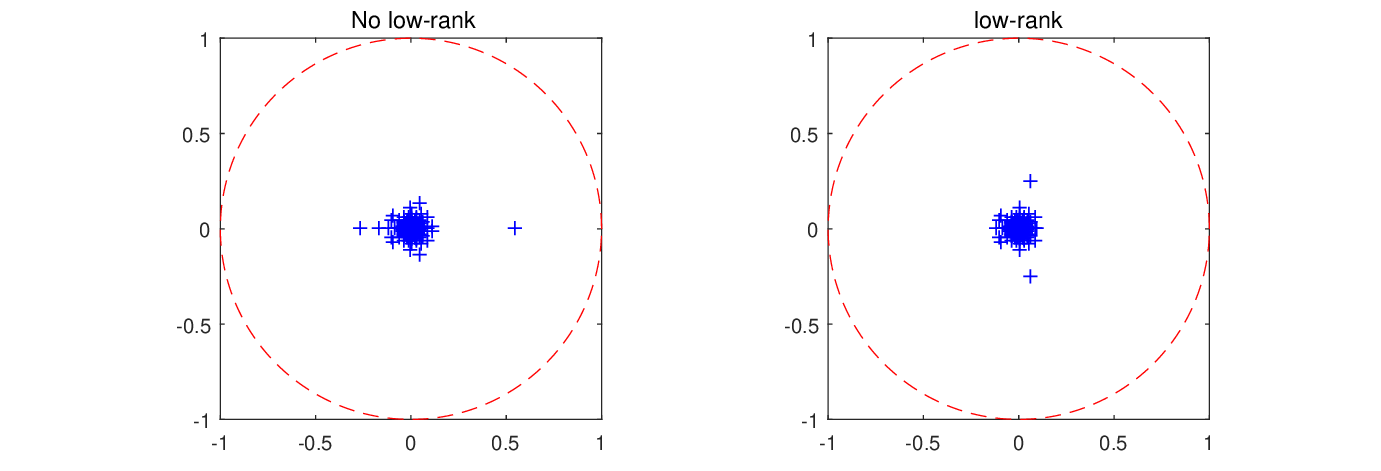}
	\caption{whether or no low-rank processed eigenvalue plots}
 \label{fig1}
\end{figure}

It can be seen that the eigenvalue distribution of the processed matrix with low-rank approximation shows little difference from the Fig. \ref{fig1}. In addition, most of the eigenvalues with modulus less than 1 still cluster near the origin. This indicates that performing low-rank processing does not significantly alter the distribution of matrix eigenvalues. Moreover, during the process of matrix inversion, the order of the matrix is reduced from $n$ to $r_k$, reducing the computational cost.

In the process of correcting low-rank matrices, the selection of the number of terms in the matrix power series expansion is also very important. This can be seen from the process of establishing the PSLR preconditioner. Moreover, in theory, different numbers of terms in the power series expansion correspond to different eigenvalue distributions. Next, we provide a numerical experiment to illustrate the above points. Both tests use the Frobenius norm to measure the degree of approximation of the Arnoldi algorithm. A 128-order random matrix was used in the numerical experiment. In the test, we fixed $r_k=15$ and changed the number of terms used in the power series expansion from $m=3$ to $m=5$. The results are shown in the following Fig. \ref{fig2}.
\newpage
\begin{figure}[H]
	\includegraphics[width=1.0\linewidth]{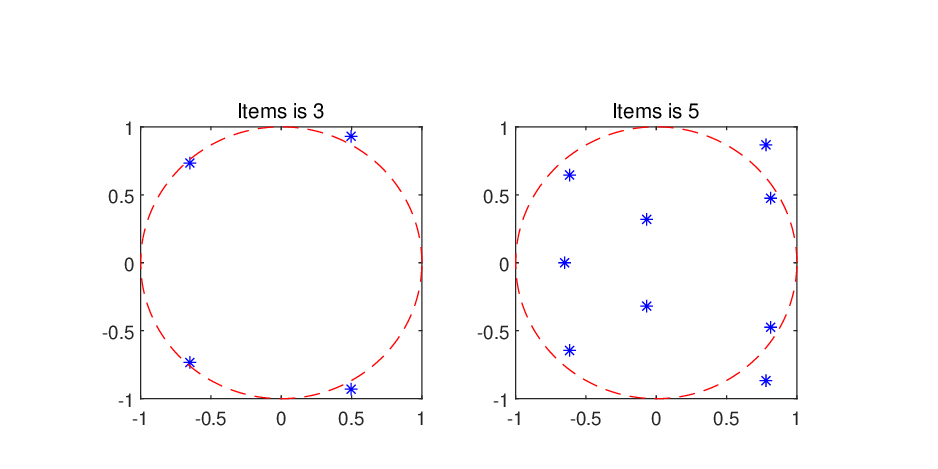}
	\caption{Graph of the change of matrix eigenvalues with the number of unfolded items}
 \label{fig2}
\end{figure}

From the Fig. \ref{fig2}, it can be seen that as the number of terms in the matrix expansion increases, the eigenvalues are more concentrated around the origin, that is, there are more zero eigenvalues. This can effectively reduce the impact of eigenvalues greater than 1 on the convergence of the power series. It can be inferred that when the number of expanded terms $m$ is large enough, there will be more zero eigenvalues in the matrix, and the approximation effect will be better. Therefore, the computational accuracy of the linear system is related to the number of expanded terms. However, we cannot increase the number of power series expansion terms infinitely, because this will increase the computational complexity. Therefore, we will compare the complexity of the PSLR algorithm with the GMSLR algorithm to determine the optimal number of power series expansion terms in Section \ref{sec:com}.

\subsection{Error Analysis of Approximate Inverse Matrix}
Firstly, let us delve into the error involved in solving the approximate inverse matrix of the 
$S$ matrix using the PSLR method. We perform incomplete LU decomposition (ILU) on $A^TA$ in the $S$ matrix, and in this case $(A^TA)^{-1}=U^{-1}L^{-1}$.

Since incomplete LU decomposition has restrictions on the positions of zero elements, namely, the positions and number of zero elements in $A^TA$ are forced to remain the same after ILU decomposition. Therefore, the method for solving the inverse matrix can also be called a method for solving the approximate inverse matrix. In the following analysis of the approximate inverse of the Schur complement matrix, $U^{-1}L^{-1}$ is used to replace $(A^TA)^{-1}$.

From \eqref{eq002}, we know that \begin{equation}S^{-1}=
	\sum_{i=0}^{m}M^iC^{-1}+\sum_{i=m+1}^{\infty}M^iC^{-1}\triangleq\sum_{i=0}^{m}M^iC^{-1}+R,\end{equation}where $M=-C^{-1}B(A^TA)^{-1}B^T$.
Let's analyze the error $R=\sum_{i=m+1}^{\infty}M^iC^{-1}.$ Taking the norm on both sides,\begin{equation}\begin{aligned}
		\left\|R\right\|=\left\|\sum_{i=m+1}^{\infty}M^iC^{-1}\right\|
		\leqslant\sum_{i=m+1}^{\infty}\left\|M\right\|^i\left\|C^{-1}\right\|
		\leqslant\frac{\left\| M\right\|^{m+1}\left\|C\right\|^{-1}}{1-\left\|M\right\|}.
\end{aligned}\end{equation}

We have the following conclusion for $\left\|M\right\|^{m+1}$.
When the spectral radius of $M$ is less than 1, $\frac{\left\|M\right\|^{m+1}}{1-\left\|M\right\|}$ converges and is bounded. Therefore, we can set a limit for the error and use a finite number of terms in the series to approximate the Schur complement $S$'s approximate inverse.

When the spectral radius of $M$ is greater than 1, we use the methods described in the Section \ref{lowrank},
define $\hat{R}=\sum_{i=m+1}^{\infty}M^i$, and in this case, there is no restriction on $M$ having $\rho(M)<1$. In particular, when $\rho(M)<1$, $\hat{R}=R$.

By the definition of a power series, we can split it into two terms and let
\begin{equation}S^{-1}=\sum_{i=0}^{m}M^iC^{-1}+\sum_{i=m+1}^{\infty}M^iC^{-1}
	\triangleq\sum_{i=0}^{m}M^iC^{-1}+\hat{R}.\end{equation}where $M=-C^{-1}B(A^TA)^{-1}B^T$.

Multiplying both sides of the equation by $S$, we get $$I=S\sum_{i=0}^{m}M^iC^{-1}+S\hat{R}.$$
Let $E_{rr}(m)=S\hat{R}=S\sum_{i=m+1}^{\infty}M^iC^{-1}$. And from \eqref{eq004}, we have 
$$E_{rr}(m)=CM^{m+1}C^{-1}.$$
Therefore, it can be seen from the subsection \ref{solveSchur}, we have \begin{equation}\begin{aligned}
		S^{-1}=
  \left[\sum_{i=0}^{m}M^iC^{-1}\right](I-E_{rr}(m))^{-1}= \left[\sum_{i=0}^{m}M^iC^{-1}\right](I-CM^{m+1}C^{-1})^{-1}
	\end{aligned}
	.\end{equation}
Using the Arnoldi algorithm, we have $E_{rr}(m)\approx V_{r_{k}}H_{r_{k}}V_{r_{k}}^T.$, where $V_{r_k}$ is a column orthogonal matrix.
In summary, the final expression of the $S$ is \begin{equation}S^{-1}\approx[\sum_{i=0}^{m}M^iC^{-1}](I+V_{r_{k}}G_{r_{k}}V_{r_{k}}^T).
\end{equation}
where $G_{r_{k}}=(I-H_{r_k})^{-1}-I$.

From the above analysis, we can see that when $E_{rr}(m)$ is determined, i.e., the number of terms in the power series expansion is determined, the saddle point linear system is also determined. At this point, the power series can be transformed into a finite sum, and there is no approximation error.

\subsection{Pinv method}\label{A=I-Fmethod}

Based on the aforementioned approach, we have derived a method for solving the linear equation system 
$Ax=b$, which we will refer to as the Pinv method, as presented in Theorem \ref{approximate inverse formula}.

\begin{theorem}\label{approximate inverse formula}
	For any matrix $A \in \mathbb{R}^{n \times n}$, and $A=I-F$, $F\in\mathbb{R}^{n \times n}$ is the matrix from which $A$ splits, then an approximate inverse of $A$ is given by \begin{equation}
		A^{-1}=\left(\sum_{i=0}^{m}F^{i}\right)(I-F^{m+1})^{-1}=(\sum_{i=0}^{m}F^{i})(I+V_{r_k}G_{r_k}V_{r_k}^T),
	\end{equation}where  $$F^{m+1}=V_{r_k}H_{r_k}V_{r_k}^T,V_{r_k}\in \mathbb{R}^{n\times r_k}(r_k\ll n),~H_{r_k}\in \mathbb{R}^{r_k\times r_k},~G_{r_k}=(I-H_{r_k})^{-1}-I.$$ And we can express the solution for $x$ as \begin{equation} x=A^{-1}b=(I-F)^{-1}b=\left(\sum_{i=0}^{m}F^{i}\right)(I+V_{r_k}G_{r_k}V_{r_k}^T)b.
	\end{equation}
\end{theorem}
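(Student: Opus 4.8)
The plan is to reduce Theorem \ref{approximate inverse formula} to the identity \eqref{eq003} already established for the Schur complement, since the stated formula for $A^{-1}$ is literally the specialization obtained by replacing $M \mapsto F$, $C \mapsto I$, and $S \mapsto A$. First I would observe that the decomposition $A = I - F$ plays exactly the role of $S = C(I-M)$ with $C = I$, so $A^{-1} = (I-F)^{-1}$ and the telescoping computation in \eqref{eq004} applies verbatim: writing $E_{rr}(m) = A\sum_{i=m+1}^{\infty}F^i = \sum_{i=m+1}^{\infty}F^i - \sum_{i=m+1}^{\infty}F^{i+1}$ and splitting $A^{-1} = \sum_{i=0}^{m}F^i + \sum_{i=m+1}^{\infty}F^i$, one multiplies through by $A$ to get $I = A\sum_{i=0}^{m}F^i + E_{rr}(m)$, hence $A\sum_{i=0}^{m}F^i = I - E_{rr}(m)$ and therefore $A^{-1} = \left(\sum_{i=0}^{m}F^i\right)(I - E_{rr}(m))^{-1}$. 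Then the same telescoping that gave $CM^{m+1}C^{-1}$ in \eqref{eq004} now gives $E_{rr}(m) = F^{m+1}$ directly, which establishes the first equality $A^{-1} = \left(\sum_{i=0}^{m}F^i\right)(I - F^{m+1})^{-1}$.

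Next I would bring in the Arnoldi low-rank correction from Section \ref{lowrank}: approximating $F^{m+1} = E_{rr}(m) \approx V_{r_k}H_{r_k}V_{r_k}^T$ with $V_{r_k}$ column-orthogonal ($V_{r_k}^T V_{r_k} = I_{r_k}$), I would repeat the chain of manipulations in \eqref{002}, namely writing $(I - V_{r_k}H_{r_k}V_{r_k}^T)^{-1} = I + V_{r_k}[(I-H_{r_k})^{-1} - I]V_{r_k}^T$. The key computational fact here is that orthonormality of the columns lets us insert $V_{r_k}V_{r_k}^T$ in place of the "inner" identity and factor, so that $(I - V_{r_k}H_{r_k}V_{r_k}^T)^{-1} = I + V_{r_k}G_{r_k}V_{r_k}^T$ with $G_{r_k} = (I-H_{r_k})^{-1} - I$; this is exactly the Sherman--Morrison--Woodbury-type identity underlying the whole PSLR construction. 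Substituting yields the second equality $A^{-1} = \left(\sum_{i=0}^{m}F^i\right)(I + V_{r_k}G_{r_k}V_{r_k}^T)$, and the final claim about $x = A^{-1}b$ is then immediate by right-multiplying by $b$.

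The main obstacle, and the place where some care is genuinely needed, is the convergence/invertibility bookkeeping: the telescoping argument tacitly uses $\sum_{i=m+1}^{\infty}F^i$ as a convergent series, which requires $\rho(F) < 1$, whereas the stated theorem asserts the formula for \emph{any} $A = I-F$. The honest way to handle this is to note that the final closed-form expression $\left(\sum_{i=0}^{m}F^i\right)(I - F^{m+1})^{-1}$ involves only finite sums and the inverse of $I - F^{m+1}$, and then verify directly by multiplication that $A \cdot \left(\sum_{i=0}^{m}F^i\right) = I - F^{m+1}$ as a polynomial identity in $F$ (this needs no spectral hypothesis, since $(I-F)\sum_{i=0}^{m}F^i = I - F^{m+1}$ telescopes algebraically), so the formula is valid whenever $I - F^{m+1}$ is nonsingular. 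I would present the series manipulation as the motivating derivation but anchor the statement on this finite algebraic identity, and I would flag that when $F^{m+1}$ is replaced by its low-rank Arnoldi surrogate $V_{r_k}H_{r_k}V_{r_k}^T$ the equality becomes an approximation (consistent with the "$\approx$" already used in Section \ref{lowrank} and quantified by the preceding Theorem with $X(m,r_k)$ and $Z(r_k)$).
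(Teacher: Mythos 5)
Your proposal is correct and follows the same overall route as the paper's proof (telescoping to obtain $A^{-1}=\bigl(\sum_{i=0}^{m}F^{i}\bigr)(I-F^{m+1})^{-1}$, then applying the Arnoldi low-rank correction), but it is more careful at exactly the two points where the paper's argument is weakest, and the differences are worth recording. First, the paper derives the identity by multiplying $A$ against the infinite series $\sum_{i=0}^{\infty}F^{i}$ and telescoping the tail $\sum_{i=m+1}^{\infty}F^{i}$; this manipulation presupposes convergence, i.e.\ $\rho(F)<1$, even though the proof closes by asserting that no spectral-radius condition was used. Your decision to anchor the statement on the finite polynomial identity $(I-F)\sum_{i=0}^{m}F^{i}=I-F^{m+1}$, which holds for every $F$ and requires only that $I-F^{m+1}$ be nonsingular, is the right fix: it actually delivers the generality (\emph{any} $A=I-F$) that the theorem claims but the paper's series manipulation does not establish. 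Second, for the low-rank step the paper's chain passes through $(I-V_{r_k}H_{r_k}V_{r_k}^T)^{-1}=(V_{r_k}V_{r_k}^T-V_{r_k}H_{r_k}V_{r_k}^T)^{-1}=V_{r_k}(I-H_{r_k})^{-1}V_{r_k}^T$, which is invalid when $r_k<n$ (it silently replaces $I$ by the projector $V_{r_k}V_{r_k}^T$ and equates a nonsingular matrix with a rank-$r_k$ one); your appeal to the Sherman--Morrison--Woodbury identity with $V_{r_k}^TV_{r_k}=I_{r_k}$, giving $(I-V_{r_k}H_{r_k}V_{r_k}^T)^{-1}=I+V_{r_k}\left[(I-H_{r_k})^{-1}-I\right]V_{r_k}^T$ directly, reaches the same final expression by a sound argument. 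Your closing caveat, that once $F^{m+1}$ is replaced by its Arnoldi surrogate the formula is only an approximation, is likewise a point the theorem's statement (which writes equality throughout) glosses over; flagging it is consistent with the error bound given earlier in terms of $X(m,r_k)$ and $Z(r_k)$.
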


\begin{proof}
	Let $A=I-F,$then $$x=A^{-1}b=(I-F)^{-1}b=\sum_{i=0}^{\infty}F^ib.$$ So the key is that we need to find a way to solve $\sum_{i=0}^{\infty}F^i$. The main idea is to deal with the series expansion of an infinite term so that it becomes a finite term series. The result of the processing is \begin{equation}\begin{aligned}	I&=A\sum_{i=0}^{\infty}F^i\\&=A\sum_{i=0}^{m}F^{i}+A\sum_{i=m+1}^{\infty}F^i\\
			&=A\sum_{i=0}^{m}F^{i}+(I-F)\sum_{i=m+1}^{\infty}F^i\\
			&=A\sum_{i=0}^{m}F^{i}+\sum_{i=m+ 1}^{\infty}F^i-\sum_{i=m+1}^{\infty}F^{i+1}\\     
			&=A\sum_{i=0}^{m}F^{i}+\sum_{i=m+1}^{\infty}F^i-\sum_{i=m+2}^{\infty}F^{i}\\&=A\sum_{i=0}^{m}F^{i}+F^{m+1}.
	\end{aligned}\end{equation}
	Hence, we can express the approximate inverse of $A$ as \begin{equation}\begin{aligned}
			A^{-1}=\left(\sum_{i=0}^{m}F^{i}\right)(I-F^{m+1})^{-1}.
	\end{aligned}\end{equation}Then we get $x=A^{-1}b=\sum_{i=0}^{m}F^ib. $
	
	For $F^{m+1}$, we can use the Arnoldi method to approximate the matrix as $$F^{m+1}=V_{r_k}H_{r_k}V_{r_k}^T,$$ where $V_{r_k}$ is the column orthogonal matrix and $H_{r_k}$ is the upper Hessenberg matrix.
	
	Therefore, we can write: \begin{equation}\begin{aligned}
			A^{-1}&=\left(\sum_{i=0}^{m}F^{i}\right)(I-F^{m+1})^{-1}\\&=\left(\sum_{i=0}^{m}F^{i}\right)(I-V_{r_k}H_{r_k}V_{r_k}^T)^{-1}\\
			&=\left(\sum_{i=0}^{m}F^{i}\right)(V_{r_k}(I-H_{r_k})^{-1}V_{r_k}^T)\\
			&=\left(\sum_{i=0}^{m}F^{i}\right)(I+V_{r_k}G_{r_k}V_{r_k}^T),
	\end{aligned}\end{equation}
	where $G_{r_k}=(I-H_{r_k})^{-1}-I$.
	
	Finally, we can obtain the solution for the general linear equation system $Ax=b$ as \begin{equation}x=A^{-1}b=\left(\sum_{i=0}^{m}F^{i}\right)(I+V_{r_k}G_{r_k}V_{r_k}^T)b.
	\end{equation} where $G_{r_k}=(I-H_{r_k})^{-1}-I$, $H_{rk}$  is the Hessenberg matrix produced by $F^{m+1}$ in the Arnoldi process.
	
	In this process we did not touch on the condition that the spectral radius of the matrix must be less than 1.
\end{proof}

\section{Analysis of the Result of Power Series Expansion}\label{sec
	:converge}
\subsection{Convergence Analysis of Power Series}

In the preprocessing process, we mainly use direct methods to construct the PSLR preconditioner to solve the linear system's initial solution. We then use the GMRES method to iteratively solve it, ultimately minimizing the error. In the process of constructing the preconditioner, the power series is used to approximate the matrix inverse. Here we give the conditions for the convergence of a power series. To prove the convergence of the power series, we first introduce some lemmas.

\begin{Definition}{\rm \cite{ref16}}
	A matrix $A\in M_n$ is said to be convergent if $\lim_{k\to\infty}A^k=0$.
\end{Definition}

\begin{lemma}{\rm\cite{ref16}}\label{le1}
	For $A\in \mathbb{C}^{n \times n}$, we have
	
	\textup{(1)} For any compatible norm $\left\|\cdot\right\|$ in $\mathbb{C}^{n \times n}$, $\rho(A)\leqslant\left\|A\right\|$.
	
	\textup{(2)} For any given $\epsilon>0$, there exists an operator norm $\left\|\cdot\right\|$ in $\mathbb{C}^{n \times n}$ such that $\left\|A\right\|\leqslant\rho(A)+\epsilon$.
\end{lemma}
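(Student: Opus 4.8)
The plan is to treat the two parts separately, since (1) is elementary while (2) needs a genuine construction; both are classical (cf.\ \cite{ref16}).

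For (1), let $\lambda$ be an eigenvalue of $A$ with $|\lambda|=\rho(A)$ and $v\neq 0$ a corresponding eigenvector. If $\|\cdot\|$ happens to be an operator norm this is immediate from $\|Av\|=|\lambda|\,\|v\|\leqslant\|A\|\,\|v\|$; for a general submultiplicative (compatible) matrix norm I would promote the eigenvector to a matrix. Let $V\in\mathbb{C}^{n\times n}$ be the matrix each of whose columns equals $v$, so $V\neq 0$ and $AV=\lambda V$. Submultiplicativity gives $|\lambda|\,\|V\|=\|AV\|\leqslant\|A\|\,\|V\|$, and dividing by $\|V\|>0$ yields $|\lambda|\leqslant\|A\|$. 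Since this holds for the eigenvalue of maximal modulus, $\rho(A)\leqslant\|A\|$.

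For (2), I would start from the Schur triangularization theorem: there is a unitary $U$ with $U^{*}AU=T$ upper triangular and diagonal entries $t_{ii}$ equal to the eigenvalues of $A$. Introduce the diagonal scaling $D_{t}=\mathrm{diag}(t,t^{2},\dots,t^{n})$ with a parameter $t\in(0,1]$ to be fixed later, and set $S=UD_{t}$. A direct computation shows $S^{-1}AS=D_{t}^{-1}TD_{t}=:\tilde{T}$ is upper triangular with the same diagonal as $T$ but with $(i,j)$-entry $t^{\,j-i}t_{ij}$ for $j>i$, so its strictly-upper-triangular part is damped by a factor $\leqslant t$. Now define the vector norm $\|x\|:=\|S^{-1}x\|_{\infty}$; its induced operator norm satisfies $\|B\|=\|S^{-1}BS\|_{\infty}$, i.e.\ the maximum absolute row sum of $S^{-1}BS$. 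Hence $\|A\|=\|\tilde{T}\|_{\infty}=\max_{i}\bigl(|t_{ii}|+\sum_{j>i}t^{\,j-i}|t_{ij}|\bigr)\leqslant\rho(A)+t\,\max_{i}\sum_{j>i}|t_{ij}|$.

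To finish, given $\epsilon>0$ I choose $t$ small enough that $t\,\max_{i}\sum_{j>i}|t_{ij}|\leqslant\epsilon$ — possible because that maximum is a fixed finite number depending only on $A$ and $U$ — and the resulting norm satisfies $\|A\|\leqslant\rho(A)+\epsilon$. The main obstacle is the construction in (2): the key realization is that an arbitrary matrix carries no norm close to $\rho(A)$ a priori, so one must first unitarily triangularize and then apply a diagonal similarity to push the superdiagonal mass toward zero; once $S$ is in hand the row-sum estimate for the $\infty$-operator norm is routine. Part (1) presents no real difficulty beyond noticing the column-replication device, which is only needed if one does not assume $\|\cdot\|$ is induced by a vector norm.
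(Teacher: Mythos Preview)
Your proof is correct and is precisely the classical argument found in Horn and Johnson \cite{ref16}, which is the reference the paper cites. Note, however, that the paper does not supply its own proof of this lemma: it is stated as a cited result with no accompanying argument, so there is nothing in the paper to compare against beyond the citation itself. Your treatment of part~(1) via the column-replication device and of part~(2) via Schur triangularization followed by the diagonal similarity $D_t=\mathrm{diag}(t,t^2,\dots,t^n)$ and the $\infty$-operator-norm row-sum estimate is exactly the standard textbook proof, and nothing more is required here.
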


\begin{lemma}{\rm\cite{ref16}}
	For $A\in \mathbb{C}^{n \times n}$, $\lim_{k\to\infty}A^k=0$ if and only if $\rho(A)<1$.
\label{le2}\end{lemma}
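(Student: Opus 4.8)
The statement to prove is Lemma~\ref{le2}: for $A\in\mathbb{C}^{n\times n}$, $\lim_{k\to\infty}A^k=0$ if and only if $\rho(A)<1$. This is a classical result, and the natural plan is to prove the two implications separately, leaning on Lemma~\ref{le1} which is already available.

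For the forward direction ($\lim_{k\to\infty}A^k=0 \Rightarrow \rho(A)<1$), I would argue by contraposition or directly via eigenvectors. If $\lambda$ is an eigenvalue of $A$ with eigenvector $x\neq 0$, then $A^k x=\lambda^k x$, so $\|A^k x\|=|\lambda|^k\|x\|$. If $A^k\to 0$ then $A^kx\to 0$, forcing $|\lambda|^k\to 0$, hence $|\lambda|<1$; taking the maximum over all eigenvalues gives $\rho(A)<1$. This direction is short and requires no machinery beyond the definition of an eigenvalue.

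For the reverse direction ($\rho(A)<1 \Rightarrow \lim_{k\to\infty}A^k=0$), I would invoke part~(2) of Lemma~\ref{le1}: since $\rho(A)<1$, choose $\epsilon>0$ small enough that $\rho(A)+\epsilon<1$, and pick an operator norm $\|\cdot\|$ with $\|A\|\leqslant\rho(A)+\epsilon<1$. Then by submultiplicativity $\|A^k\|\leqslant\|A\|^k\to 0$, so $A^k\to 0$ in that norm, and since all norms on the finite-dimensional space $M_n$ are equivalent, $A^k\to 0$ entrywise as well. An alternative route is to put $A$ in Jordan canonical form $A=PJP^{-1}$ and estimate the powers of each Jordan block $J_i=\lambda_i I+N$ directly, using that $\|J_i^k\|$ is bounded by a polynomial in $k$ times $|\lambda_i|^k$; but the norm argument via Lemma~\ref{le1}(2) is cleaner and matches the tools the paper has already set up.

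The main obstacle is really just the reverse direction, and specifically the fact that $\rho(A)<\|A\|$ can hold strictly for some norms, so one cannot simply say ``$\rho(A)<1$ implies $\|A\|<1$'' for an arbitrary norm — one must pass to the specially chosen norm from Lemma~\ref{le1}(2). Once that norm is in hand the estimate is immediate. I would also make explicit the (standard, finite-dimensional) fact that convergence in one norm implies convergence in every norm, so that the conclusion $\lim_{k\to\infty}A^k=0$ is norm-independent as stated. No genuinely hard computation is involved; the proof is a careful assembly of Lemma~\ref{le1}, the eigenvalue identity $A^kx=\lambda^kx$, and norm equivalence.
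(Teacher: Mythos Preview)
Your proof is correct and is the standard textbook argument. The paper does not actually prove this lemma: it is simply quoted from Horn and Johnson \cite{ref16} with no accompanying proof, so there is nothing in the paper to compare against; your use of Lemma~\ref{le1}(2) for the reverse implication and the eigenvector identity $A^kx=\lambda^kx$ for the forward implication is exactly the approach one finds in that reference.
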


\begin{lemma}\textup{\cite{ref16}} \label{le3} Let $R$ be the convergence radius of the pure scalar power series $\sum_{k=0}^{\infty}a_kz^k$, and let $A\in M_n$ be given. If $\rho(A)<R$, then the matrix power series $\sum_{k=0}^{\infty}a_kA^k$ converges, where $\rho(A)$ denotes the spectral radius of the matrix $A$.
\end{lemma}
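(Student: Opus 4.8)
The plan is to reduce the matrix series to a scalar series by choosing a well-adapted operator norm, and then invoke absolute convergence together with completeness of $M_n$. Concretely, since $\rho(A)<R$, I would first fix a real number $\epsilon>0$ small enough that $\rho(A)+\epsilon<R$; this is possible because $\rho(A)<R$ leaves a positive gap. By Lemma \ref{le1}(2) there then exists an operator norm $\|\cdot\|$ on $\mathbb{C}^{n\times n}$ with $\|A\|\leqslant\rho(A)+\epsilon<R$.

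Next I would estimate the tail of the matrix series in this norm. Because $\|\cdot\|$ is submultiplicative (an operator norm), $\|A^k\|\leqslant\|A\|^k$, and hence $\|a_kA^k\|\leqslant|a_k|\,\|A\|^k$. Since $\|A\|<R$, the point $z=\|A\|$ lies strictly inside the disk of convergence of $\sum_{k=0}^{\infty}a_kz^k$, so the scalar series $\sum_{k=0}^{\infty}|a_k|\,\|A\|^k$ converges (absolute convergence holds strictly inside the radius of convergence). Therefore $\sum_{k=0}^{\infty}\|a_kA^k\|$ is dominated by a convergent series of nonnegative reals and converges.

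Finally I would conclude by completeness: the finite-dimensional space $\mathbb{C}^{n\times n}$ equipped with any norm is a Banach space, so an absolutely convergent series converges. Hence $\sum_{k=0}^{\infty}a_kA^k$ converges in the chosen norm, and since all norms on $\mathbb{C}^{n\times n}$ are equivalent, it converges unconditionally (i.e., entrywise). The only mildly delicate point — the ``main obstacle'' — is making sure the norm supplied by Lemma \ref{le1}(2) is actually an operator (submultiplicative) norm so that the bound $\|A^k\|\leqslant\|A\|^k$ is legitimate; this is precisely what part (2) of Lemma \ref{le1} guarantees, so no extra work is needed beyond citing it carefully. Everything else is a routine comparison-test argument.
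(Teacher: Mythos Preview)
Your argument is correct and is essentially the standard proof: pick an operator norm close to the spectral radius via Lemma~\ref{le1}(2), use submultiplicativity to dominate $\|a_kA^k\|$ by $|a_k|\,\|A\|^k$, invoke absolute convergence of the scalar series strictly inside its radius, and finish with completeness of $M_n$. There is nothing to compare against here, because the paper does not supply its own proof of Lemma~\ref{le3}; it simply quotes the result from Horn and Johnson~\cite{ref16}, and what you have written is exactly the argument one finds there.
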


\begin{theorem}
	\textup{\cite{ref12}} For $A\in \mathbb{C}^{n \times n}$, we have
	
	\textup{(1)} The power series $\sum_{k=0}^{\infty}A^k$ converges if and only if $\rho(A)<1$.
	
	\textup{(2)} When $\sum_{k=0}^{\infty}A^k$ converges, we have $$\sum_{k=0}^{\infty}A^k=(I-A)^{-1}.$$ and 	for all positive integers $m$, there exists an operator norm $\left\|\cdot\right\|$ in $\mathbb{C}^{n \times n}$ such that \begin{equation}\left\|(I-A)^{-1}-\sum_{k=0}^{m}A^k\right\|\leqslant\frac{\left\|A\right\|^{m+1}}{1-\left\|A\right\|}.\end{equation} 
	
\end{theorem}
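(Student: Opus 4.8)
The plan is to handle the two parts separately, relying on Lemmas \ref{le1}--\ref{le3} and the characterization of convergent matrices (Definition above and Lemma \ref{le2}).

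For part (1), the forward implication is immediate from Lemma \ref{le3}: the scalar geometric series $\sum_{k=0}^{\infty}z^k$ has convergence radius $R=1$, so if $\rho(A)<1=R$ then the matrix power series $\sum_{k=0}^{\infty}A^k$ converges. For the converse, I would argue that convergence of $\sum_{k=0}^{\infty}A^k$ forces its general term to vanish, i.e. $\lim_{k\to\infty}A^k=0$, so $A$ is a convergent matrix; Lemma \ref{le2} then yields $\rho(A)<1$.

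For part (2), assume $\rho(A)<1$. First note that $1$ is not an eigenvalue of $A$, so $I-A$ is nonsingular. Writing $S_m=\sum_{k=0}^{m}A^k$, the telescoping identity $(I-A)S_m=I-A^{m+1}$, combined with $A^{m+1}\to 0$ (Lemma \ref{le2}) and continuity of left multiplication by $(I-A)^{-1}$, gives $S_m\to(I-A)^{-1}$, which is the claimed identity. For the error estimate I would rearrange
$(I-A)^{-1}-S_m=(I-A)^{-1}\bigl(I-(I-A)S_m\bigr)=(I-A)^{-1}A^{m+1}=\sum_{k=m+1}^{\infty}A^k$,
then invoke Lemma \ref{le1}(2): choose $\epsilon>0$ with $\rho(A)+\epsilon<1$ and an operator norm $\|\cdot\|$ with $\|A\|\leqslant\rho(A)+\epsilon<1$. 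Submultiplicativity and the triangle inequality applied to the convergent tail give $\bigl\|\sum_{k=m+1}^{\infty}A^k\bigr\|\leqslant\sum_{k=m+1}^{\infty}\|A\|^{k}=\dfrac{\|A\|^{m+1}}{1-\|A\|}$.

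The only genuinely delicate point is the choice of norm in the final step: the inequality fails for a generic matrix norm, since $\|A\|\geqslant 1$ is possible even when $\rho(A)<1$, so it is essential to use Lemma \ref{le1}(2) to produce an operator norm in which $A$ is a strict contraction \emph{before} summing the geometric tail. Everything else is routine manipulation of partial sums, and no hypothesis beyond $\rho(A)<1$ is needed.
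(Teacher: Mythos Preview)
Your argument for part (1) is essentially identical to the paper's: both directions are handled exactly as you describe, invoking Lemma \ref{le3} (with $R=1$ for the scalar geometric series) for $\rho(A)<1\Rightarrow$ convergence, and Lemma \ref{le2} (via $A^k\to 0$) for the converse. The paper's proof, however, stops there and does not address part (2) at all; your treatment of the identity $\sum_{k\ge 0}A^k=(I-A)^{-1}$ via the telescoping relation $(I-A)S_m=I-A^{m+1}$, together with your use of Lemma \ref{le1}(2) to select an operator norm with $\|A\|<1$ before summing the geometric tail, correctly supplies what the paper omits.
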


\begin{proof}
	Since $\sum_{k=0}^{\infty}A^k$ converges, we have $\lim_{k\to\infty}A^k=0$. By Lemma \ref{le2}, we know that $\rho(A)<1$. We have thus demonstrated the necessity of the theorem. In addition,
	the convergence radius of the power series $\sum_{k=0}^{\infty}z^k$ is $R=1$. Therefore, According to Lemma \ref{le3}, when $\rho(A)<1$, $\sum_{k=0}^{\infty}A^k$ converges. We have proven the sufficiency of the theorem.
\end{proof}

For power series expansion with a matrix spectral radius greater than 1, the convergence is not guaranteed, and divergent power series cannot provide a sufficiently accurate approximation of the inverse matrix. Therefore, we transform the power series expansion to indirectly solve $A^{-1}$.

As discussed in Section \ref{A=I-Fmethod}, considering the analysis of the matrix spectral radius, we have $$A^{-1}=\left(\sum_{i=0}^{m}F^i\right)(I+V_{r_k}G_{r_k}V_{r_k}^T),$$ where $A=I-F$ and $G_{r_k}=(I-H_{r_k})^{-1}-I$. It is evident that the power series has been transformed into a finite series, which can be computed. Although this method only provides an approximate inverse of the matrix, we can utilize the GMRES method to search for a more accurate solution to the linear system.

\subsection{Complexity Analysis of PSLR-GMRES Method}\label{sec:com}
Based on the above analysis, the number of expanded terms in the power series has a significant impact on the calculation results. Therefore, it is crucial to determine the optimal number of terms for the power series expansion. In this section, we present the general framework of the PSLR-GMRES algorithm and compare the complexity of PSLR, MSLR, and GMSLR algorithms. We perform complexity analysis on the positive saddle point matrix.

The PSLR-GMRES algorithm consists of two main parts. The first part summarizes methods for approximating the inverse of a general matrix $A$. In the second part, a PSLR preprocessor is established to process the Schur complement $S^{-1}=(I+B(A^TA)^{-1}B^T)^{-1}$. In the third part, $z=PSLR(b)$ is calculated. The GMRES algorithm is then used iteratively to solve the equation.
\subsubsection{Establishing the PSLR preprocessor}
To better describe the process of preprocessor establishment, we present the following Algorithm \ref{al1}.
\begin{algorithm}[H]
	\begin{algorithmic}
		\caption{Calculate the $A^{-1}$ on the general matrix}
		\label{al1}
		\STATE	1:\;Split the matrix so that it is equal to the difference between the identity
		matrix and the matrix $F$, i.e. 
      $$A=I-F.$$	
		
		\STATE	2:\;\textbf{For} $j=1,\cdots,n $ \textbf{Do} ($n$ is the number of expanded terms for  given power series)
		\STATE	3:\;Calculate $\sum_{i=0}^{n}F^i$.
		\STATE	4:\;\textbf{end for}	
		\STATE	5:\;Compute $$E_(rr)(n)=F^{n+1}.$$
		\STATE	6:\;Apply the $Arnoldi$ algorithm, we get
		$$(V_{r_k},H_{r_k})=Arnoldi(E_{rr}(n),r_k).$$
		\STATE	7:\;Calculate $G_{r_k}=(I-H_{r_k})^{-1}-I$.
		\STATE	8:\;Get $A^{-1}=(\sum_{i=0}^{n}F^{i}) (I+V_{r_k}G_{r_k}V_{r_k}^T)$.
	\end{algorithmic}
\end{algorithm}

When approximating the inverse of the Schur complement $S$, the calculation steps are similar to those of approximating the inverse of $A$, as illustrated in the following Algorithm \ref{al2}.

\begin{algorithm}[H]
	\begin{algorithmic}
		\caption{Calculate the $S^{-1}$ on the saddle matrix}
		\label{al2}
		
		\STATE	1:\;Matrix $A^TA$ performs IC decomposition and makes $M=-C^{-1}B(A^TA)^{-1}B^T$.
		\STATE	2:\;Diagonal block matrix tiles for $A^{T}A$. In the following process, the elements 
		on the diagonal are generally extracted,
		$$\left[L_{A},L_{A}\right]=IC(A^TA),(A^TA)^{-1}=(L_{A}^TL_{A})^{-1}.$$
		\STATE	3:\;\textbf{For} $j=1,\cdots,m$ \textbf{Do}($m$ is the number of expanded terms for  given power
		series)	
		\STATE	4:\;Calculate $\sum_{i=0}^{m}M^iC^{-1}$.
		\STATE	\;\textbf{end for}	
		\STATE	5:\;Apply the equation \eqref{eq004},$$E_{rr}(m)=CM^{m+1}C^{-1}.$$
		\STATE	6:\;Apply Arnoldi algorithm, we get
		$$(V_{r_k},H_{r_k})=Arnoldi(E_{rr}(m),r_k).$$
		\STATE	7:\;Calculate $G_{r_k}=(I-H_{r_k})^{-1}-I$.
	\end{algorithmic}
\end{algorithm}

When $S^{-1}$ in equation \eqref{eq004} can be approximated, then the PSLR preprocessing is basically completed and we can substitute the equation and then use GRMES to solve it iteratively.

\begin{algorithm}[H]
	\begin{algorithmic}
		\caption{Algorithmic flow for applying PSLR  $z=PSLR(b)$}
		
		\STATE	1:\;Right-end vector $b=\begin{pmatrix}f\\g\end{pmatrix}$.	
		\STATE	2:\;Compute $y=(g-B(A^TA)^{-1})f$.
		\STATE	3:\;Update $y\leftarrow y+V_{r_k}(G_{r_k}(V_{r_k}^Ty)$.
		\STATE	4:\;Compute
		$M=-C^{-1}B(A^TA)^{-1}B^T,y\leftarrow \sum_{i=0}^{m}M^iC^{-1}y$.
		\STATE	5:\;Solving $A^TAx=f-By$.
		\STATE	6:\;$z=\begin{pmatrix}
			x\\y\end{pmatrix}$.
		
	\end{algorithmic}
\end{algorithm}

\subsubsection{Complexity of PSLR-GMRES Method}
Next, we calculate the cost required for the algorithm. Let $A \in \mathbb{R}^{n\times n}$. First, for ILU decomposition, in the case where the fill-in factor is 0, the calculation process of ILU(0) will be simplified because there is no need to consider the influence of additional fill-in elements. Typically, the time complexity of ILU(0) is close to O(nnz).

Additionally, the complexity mainly lies in the Arnoldi process.
Let $A\in \mathbb{R}^{n\times n}, V_{r_k}\in \mathbb{R}^{n\times r_k}, H_{r_k}\in \mathbb{R}^{r_k\times r_k}$, according to the formula $$E_{rr}(m)=CM^{m+1}C^{-1}=V_{r_k}H_{r_k}V_{r_k}^T.$$ Thus, we summarize the steps of the Arnoldi process as follows.

\textbf{Initialization:} Choose the initial vector $v_1$, compute the matrix $Av_1$, normalize it to obtain the vector $v_2$, and construct the initial matrix $V_{r_k}=[v_1,v_2]$.

The iterative steps are as follows.

\textbf{Step 1}. Compute the vector $w=Av_i-\sum_{j=1}^{i}v_jh_{ji}$, where $h_{ji}=v_j^TAv_i$ represents the projection of $v_i$ onto $v_j$ ($i=2\cdots r_k$).

\textbf{Step 2.} For $j=1$ to $i$, compute $h_{ji}=v_j^TAv_i$.

\textbf{Step 3.} Compute $h_{(i+1)i}=|w|$ and normalize it to obtain the vector $v_{i+1}$.

\textbf{Step 4.} Update the $i$th row and the $(i+1)$th column of the matrix $H_{r_k}$ with $h_{ji}$ and $h_{(i+1)i}$, respectively.

\textbf{Output:} Return the matrices $V_{r_k}$ and $H_{r_k}$.

The complexity of\textbf{ Step 1} is $\mathcal{O}(n)$, where $n$ is the dimension of the vector. The number of iterations in \textbf{Step 2} is $r_k-1$. For each iteration, the complexity of computing the vector $w$ is $\mathcal{O}(n)$, the complexity of computing $h_{ji}$ is $\mathcal{O}(n)$, and the complexity of computing $h_{(i+1)i}$ is $\mathcal{O}(n)$. Therefore, the total complexity of \textbf{Step 2} is $\mathcal{O}((r_k-1)n)$. Thus, the complexity is $\mathcal{O}(n+(r_k-1)n)=\mathcal{O}(r_kn)$.

Next, we calculate the complexity of inverting a matrix, For an $n$-dimensional lower triangular matrix $L$, its inverse matrix $L^{-1}$ is also a lower triangular matrix, and it satisfies the following recursive formula.

$$
(L^{-1})_{i,j} = -\frac{1}{L_{i,i}}\sum_{k=i+1}^n L_{i,k}(L^{-1})_{k,j},\quad i<j,
$$
$$
(L^{-1})_{i,i} = \frac{1}{L_{i,i}},\quad i=1,2,\cdots,n.
$$
Since $L$ is a lower triangular matrix, $L_{i,k}=0$ when $k>i$. Therefore, the above recursive formula only needs to calculate the values when $i<j$, which has a complexity of $\mathcal{O}(n^2)$. Thus, the complexity of finding the inverse matrix of an $n$-dimensional lower triangular matrix is $\mathcal{O}(n^2)$.

Finally, the complexity of the GMRES algorithm is $\mathcal{O}(n^2k)$, where $n$ is the dimension of the matrix and $k$ is the number of iterations. Typically, $k$ is much smaller than $n$.

Finally, the complexity of the GMRES algorithm is 
$\mathcal{O}(n^2k)$, where 
$n$ is the dimension of the matrix and 
$k$ is the number of iterations. Typically, 
$k$ is much smaller than 
$n$. In conclusion, the complexity of this algorithm is: $$\mathcal{O}(n^3)+\mathcal{O}(r_kn)+\mathcal{O}(n^2)+\mathcal{O}(n^2k).$$
However, when solving directly, the complexity used is $\mathcal{O}(n^3)$. It reduces the complexity of solving matrix equations.

In reference \cite{ref6}, when comparing the complexity of PSLR preconditioning with GMSLR and MSLR preconditioners for solving general linear systems, the following conclusions were drawn.
For GMSLR preconditioner,
\setlength {\parindent} {2em}
\begin{itemize}
	\item[(a)] when $m=0$, $s>2$, $f(s)>0$, i.e. $\gamma_{GMSLR}>\gamma_{PSLR}$;
	\item[(b)]when $m=1$, $s\geqslant3$, $f(s)>0$, i.e. $\gamma_{GMSLR}>\gamma_{PSLR}$;
	\item[(c)]when $m=2$, $s\geqslant5$, $f(s)>0$, i.e. $\gamma_{GMSLR}>\gamma_{PSLR}$;
	\item[(d)]when $m=3$, $s>8$, $f(s)>0$, i.e. $\gamma_{GMSLR}>\gamma_{PSLR}$;
	\item[(e)]when $m=4$, $s\geqslant20$, $f(s)>0$, i.e. $\gamma_{GMSLR}>\gamma_{PSLR}$;
\end{itemize}
Moreover, for any $s$, if $m\geq 5$, then $f(s) < 0$.

Similarly, we can compare MSLR with PSLR and obtain the following results.

\begin{itemize}
	\item[(a)]when $m=0$, $s>2$, $\gamma_{MSLR}>\gamma_{PSLR}$;
	\item[(b)]when $m=1$, $s\geqslant3$, $\gamma_{MSLR}>\gamma_{PSLR}$;
	\item[(c)]when $m=2$, $s\geqslant6$, $\gamma_{MSLR}>\gamma_{PSLR}$;
	\item[(d)]when $m=3$, $s\geqslant8$, $\gamma_{MSLR}>\gamma_{PSLR}$;
	\item[(e)]for any $s$, if $m\geq 4$, $\gamma_{MSLR}<\gamma_{PSLR}$.
\end{itemize}

In the algorithm, we set $s$ as the number of blocks for matrix partitioning. Based on the above conclusions, we only expand the terms in the algorithm up to 5.

\section{Experimental results}
\label{sec:experiments}

This section presents numerical experiments that demonstrate the efficiency and robustness of the PSLR preregulator. The experiments include both symmetric and asymmetric cases, and are implemented in Matlab using the PSLR preregulator and the matrix equation solver. The preconditioning construction time includes the ILU decomposition of the inverse matrix, as well as the computation of $V_{r_k}$ and $G_{r_k}$. In practical calculations, the right end vector $b$ is randomly selected to ensure that $Ax=b$ is a random vector. Additionally, the initial guess for the Krylov subspace method is always taken as a zero vector.

In order to better evaluate the performance of the PSLR preconditioner, we tested it using saddle point matrices and compared symmetric and asymmetric matrices. For asymmetric saddle point matrices, we compared the influence of different initial values and different orders on PSLR-GMRES. For symmetric saddle point matrices, we compared the impact of different treatments of $(A^TA)^{-1}$ on PSLR-GMRES, the impact of matrix sorting on PSLR-GMRES and the impact of the number of power series expansion terms $m$ as well as the comparison of the solving speed between CG, ADI, Pinv and PSLR-GMRES.

In the remainder of this section, the following symbols will be used.

its: The number of iterations of GMRES or CG required to reduce the initial remaining norm by $10^{-6}$.

F: Indicates that GMRES or CG failed to converge within 500 iterations.

o-t: The time required to reorder the matrix.

p-t: The time required for pre-regulator construction.

i-t: The wall clock time required for the iterative process.

t-t: The preprocessing and solution time, i.e., the sum of the time required for preconditioning construction.

n-iter: The number of iterations required to find an approximate solution in each iteration.

If GMRES or CG fails to converge within 3000 iterations, it is denoted with "-".

\subsection{Example 1}
In this subsection, we will analyze the various factors that impact the effectiveness of PSLR preconditioner. We employ the PSLR-GMRES method to solve a saddle point matrix. Furthermore, we thoroughly investigate and analyze the factors that influence the PSLR preconditioner. Additionally, the time unit in the numerical example is in seconds.

In modern financial research, the arbitrage pricing theory models the expected return of a risky asset as a linear function of the asset's sensitivity to a set of factors, commonly referred to as a multifactor model\cite{ref4}. This model can be expressed as

 \begin{equation}
	\begin{bmatrix}
		A^TA&B^T\\-B&I
	\end{bmatrix}\begin{bmatrix}x\\y\end{bmatrix}=\begin{bmatrix}f\\g\end{bmatrix},\end{equation}
where the matrix has an order of 256, and the number of iteration steps of the Arnoldi algorithm is fixed at $r_k=15$. $\mathcal{A}$  is split into four tiled matrices of the same order.  We conduct testing using the following numerical matrix.
$$A^TA=
\begin{bmatrix}
	4&1&0&\cdots&0\\
	1&4&1&\cdots&0\\
	0&1&4&\cdots&0\\
	\cdots&\cdots&\cdots&\cdots&\cdots\\
	0&0&0&\cdots&4\\
\end{bmatrix}_{128\times128},
B^T=\begin{bmatrix}
	0&0&0&\cdots&0\\
	0&0&0&\cdots&0\\
	0&0&0&\cdots&0\\
	\cdots&\cdots&\cdots&\cdots&\cdots\\
	1&0&0&\cdots&0\\
\end{bmatrix}_{128\times128}.$$
And they forms a saddle point matrix. The right-hand vector $b$ is a random vector.

\subsubsection{The Impact of Different Types of Initial Values}

Expanding the five terms in a power series reveals that the impact of different types of initial values on solution efficiency varies. As shown in the Table \ref{tab1}, comparing the preprocessed initial value with the zero initial value and the random initial value, we conclude that although the three initial values have the same number of iteration steps, the preprocessed initial value has a shorter iteration time than the zero initial value and the random initial value.

\begin{table}[H]
	\begin{center}
		\caption{Comparison chart of different initial values}
  \label{tab1}
		\begin{tabular}{c|c|c|c}
			\hline
			Different initial values	&  n-iter& i-t&error   \\
			\hline
			Pre	& 11&0.000882 &9.322617e-07     \\
			
			Zero	&11&0.001128 &3.210799e-07 \\
			
			Random	& 12&  0.001770&2.947776e-07     \\
			\hline
		\end{tabular}
	\end{center}
\end{table}

"Pre" refers to the initial value obtained through the preprocessing process described earlier. "Zero" indicates that the initial value is zero. "Random" indicates that the initial value is a random vector of the same dimensions as the saddle point matrix. From the  Table \ref{tab1}, we can see that the PSLR method has certain advantages in terms of the number of iterations and the solution time.

\subsubsection{The Influence of Different Orders on Solution Efficiency}
To further investigate the saddle point system, we conducted tests of various orders and obtained the following results, as shown in the Table 
 \ref{tab2}.

\begin{table}[H]
	\begin{center}
		\caption{Comparison chart of different orders}
  \label{tab2}
		\begin{tabular}{c|c|c|c|c|c}
			\hline
			Order&128  &256&512  &1024 & 2048   \\
			\hline
			o-t&0.000185 &0.000255  &0.000696&0.002111 &0.010115  \\
			
			p-t	&0.014472  &0.028598& 0.113125& 0.693792 &  5.259069   \\
			
			i-t	&0.038024&0.189584& 1.053674 & 7.048978   &   46.408650    \\
			\hline
		\end{tabular}
	\end{center}
\end{table}

Based on the Table \ref{tab2} above, we can observe that the solution rate (number of iteration steps and CPU time) of the saddle point system increases with the increase in order. To better illustrate this, we have plotted a line chart as shown Fig. \ref{fig3}.

\begin{figure}[H]
	\centering
	\includegraphics[width=0.7\linewidth]{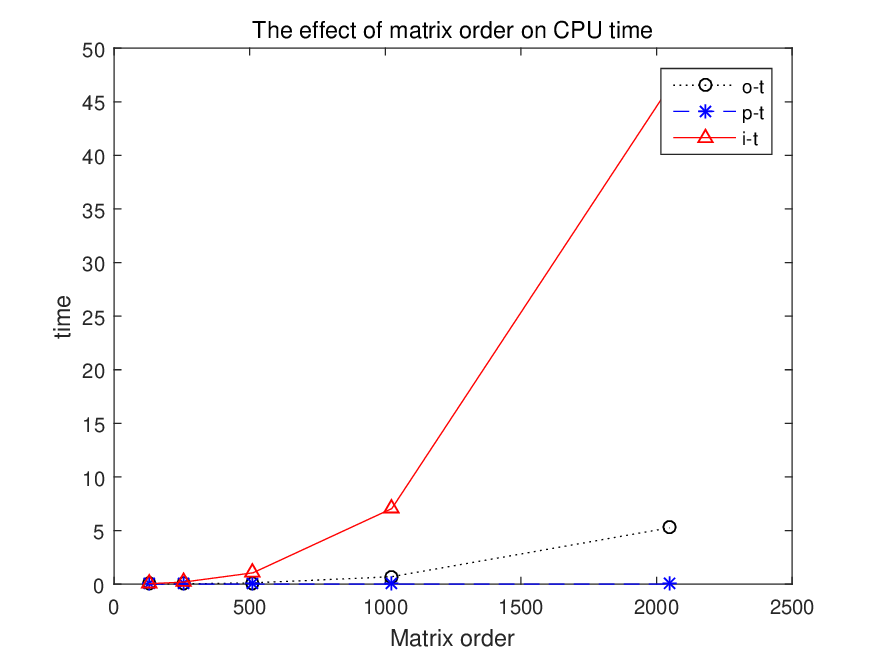}
	\caption{The effect of different orders on CPU time}
	\label{fig,The influence of matrix order on solving saddle point matrices}
 \label{fig3}
\end{figure}

\subsection{Example 2}

Consider a Stokes equation that satisfies the velocity vector $u$ and the pressure field $p$ in $\Omega$,\begin{equation}\label{eq17}\begin{cases}
		\xi u-v\Delta u+\nabla p=f \;\;in\;\Omega\\
		\nabla\cdot u=0 \;\;in \;\Omega
	\end{cases}.\end{equation}
We aim to determine the appropriate boundary condition on $\partial \Omega$ in Equation \eqref{eq17}, where $\Omega$ is a bounded domain in $\mathbb{R}^2$ or $\mathbb{R}^3$. The parameter $f$ represents a given force, and $v>0$ (viscosity) and $\xi \leq 0$ are additional parameters. When $\xi = 0$, the problem corresponds to the classical stationary Stokes problem.

We discretize the problem using both the standard finite difference method and the node finite element method, resulting in a linear system\begin{equation}\begin{bmatrix}
		A^TA&B^T\\B&C
	\end{bmatrix}\begin{bmatrix}
		u\\p
	\end{bmatrix}=\begin{bmatrix}
		b_u\\b_p
	\end{bmatrix}.\end{equation}
The matrices $A^TA$ and $C$ are symmetric positive definite, while $B$ is a sparse matrix. The right-hand side vector is randomly generated.

To better understand the saddle point system, we investigate the influence of various factors on the solution rate of the system through controlled variables.

\subsubsection{The Impact of Different Preprocessing $(A^TA)^{-1}$}
To solve $S^{-1}$, we employ the PSLR preprocessing method to expand $S^{-1}$ into a power series. However, for intermediate processing of $(A^TA)^{-1}$, we utilize alternative methods such as

Inversion using incomplete LU decomposition and LU decomposition of matrices \begin{equation}A^{-1}=U^{-1}L^{-1}.\end{equation}

Inversion using incomplete Cholesky decomposition and Cholesky decomposition of matrices \begin{equation}A^{-1}=(LL^T)^{-1}=(L^T)^{-1}L^{-1}.\end{equation}

We tested these methods using a symmetric matrix "494\_bus", and the results are shown in the following Table \ref{tab3}.

\begin{table}[H]
	\begin{center}
		\caption{Comparison chart of different inverse methods}
  \label{tab3}
		\begin{tabular}{c|c|c|c|c}
			\hline
			494\_bus &LU inverse &ILU inverse &IC inverse & Cholesky inverse \\
			\hline
			i-t &0.132069&  0.099848&0.092171&0.129752\\
			
			n-iter & 593 &517&511&594 \\
			
			error&9.765600e-07&9.302003e-07&9.695692e-07&9.990874e-07\\
			\hline
		\end{tabular}
	\end{center}
\end{table}

According to the results presented in the preceding Table \ref{tab3}, it has been observed that the IC approach exhibits the highest success rate in determining $(A^TA)^{-1}$. Consequently, If it is a symmetric matrix, we will employ the IC method for computing $(A^TA)^{-1}$ in the subsequent analysis.

\subsubsection{Effect of Matrix Ordering on Solution Rate}

To better understand the factors influencing the solution rate of the saddle point system, we tested the effect of matrix ordering. The results of the effect of matrix ordering on the solution rate are shown in the following Table \ref{tab4}.

\begin{table}[H]
	\begin{center}
		\caption{Comparison of the impact of whether the matrix is ordering or not}
   \label{tab4}
		\begin{tabular}{c|c|c|c}
			\hline
			Matrix name	& sort & i-t &  n-iter\\
			\hline
			\multicolumn{1}{c|}{\multirow{2}{*}{494\_bus}}	&yes &  0.358443 &324  \\
			
			&  no& 0.409336  &341  \\
			\hline
			\multicolumn{1}{c|}{\multirow{2}{*}{mcfe.mtx}}	&yes &8.112854  &1417  \\
			
			&  no& 8.512631   &1441  \\
			\hline
			\multicolumn{1}{c|}{\multirow{2}{*}{fs\_541\_3.mtx}}	&yes &0.491955   &396  \\
			
			&  no& 0.837130   &513 \\
			\hline	
		\end{tabular}
	\end{center}
\end{table}

We conducted tests on different matrices and found that matrix ordering affects the solution rate of the saddle point system. When the matrix is ordered, the number of iterations and solution time are reduced, resulting in an improved solution rate for the saddle point system.

\subsubsection{Effect of the Number of Power Series Terms $m$}

As mentioned earlier, the number of terms used in power series expansion is also an important factor. We studied this factor by solving the saddle point problem named "494\_bus", with $r_k=15$. The sorting time (o-t) is 0.000107 seconds.
The iteration counts and CPU times for different values of $m$ are shown in the Table \ref{tab5}. As $m$ increases from 0 to 5, the number of iterations varies. This can be attributed to the improved clustering of spectra of preconditioned Schur complement $m$ as the number of terms used in power series expansions increases. However, the time to build the PSLR preregulator has increased slightly. As $m$ increases from 0 to some normal number, the number of iterations decreases significantly and then decreases only slightly. We expect the iteration time to decrease first and then increase, which is verified by the numerical results in the Table \ref{tab5}. As $m$ increases from 0 to 2, the iteration time first decreases from 155 seconds to 0.39 seconds, and then as $m$ continues to increase, the iteration time first increases to 0.47 seconds and then decreases to 0.40 seconds.

\begin{table}[H]
	\begin{center}
		\caption{Effect of items on solution efficiency}
  \label{tab5}
		\begin{tabular}{c|c|c|c|c}
			\hline
			m	&  n-iter& i-t & p-t &t-t \\
			\hline
			0	&-& 155.194490&0.023766&150.218256 \\
			
			1	&320& 0.397969&0.020083 & 0.598799  \\
			
			2	&317& 0.398314 & 0.019542&0.417856\\
			
			3	&327& 0.402742&0.028274&0.431016	\\
			
			4	&323& 0.470357 &0.032013& 0.50237\\
			
			5	&324&  0.400080 &0.020267&0.420347\\
			\hline
		\end{tabular}
	\end{center}
\end{table}

The specific running time change trend is shown in the Fig. \ref{fig,untitled}.

\begin{figure}[H]
	\centering
	\includegraphics[width=0.7\linewidth]{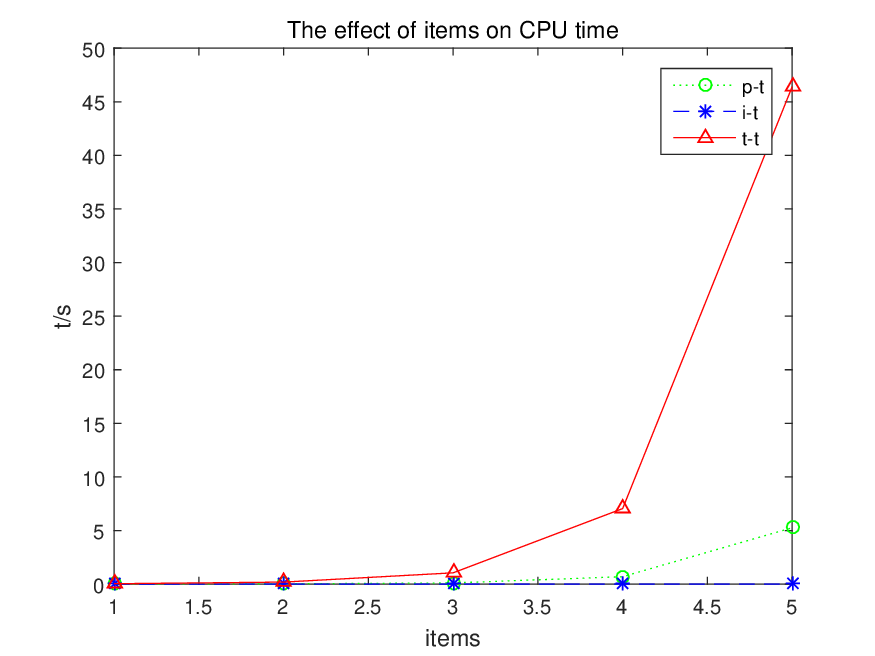}
	\caption{Influence of different power series expansion terms $m$ on saddle point matrix solving}
	\label{fig,untitled}
\end{figure}

\subsubsection{Impact of Different Solution Method}
We compared the convergence rates of four methods, namely PSLR-GMRES, Pinv, CG, and ADI, for solving saddle point systems. The results are as follows.

\textbf{1.Comparison of CG method and PSLR-GMRES method.}
We investigated the impact of different methods for solving the saddle point system and obtained the results shown in the following Table \ref{tab6}. The letters "IC", "ILU", and "LU" in the matrix name refer to the method used for solving the inverse of the matrix. Different matrices correspond to different methods for solving the inverse of $B$ in the tiled matrix.

\begin{table}[H]
	\begin{center}
		\caption{Comparison table of the effects of PSLR method and CG method}
  \label{tab6}
		\begin{tabular}{c|c|c|c|c}
			\hline
			\multicolumn{1}{c|}{\multirow{2}{*}{Matrix name}}&	\multicolumn{2}{c}{PSLR-GMRES}	&\multicolumn{2}{|c}{CG}  \\
			\cline{2-5}
			&  n-iter& i-t  &n-iter&i-t \\
			\hline
			1138\_bus.mtx+IC& 581 & 1.634505  & 2715  &0.268293   \\
			
			685\_bus.mtx+IC&333&0.401070 &594&0.067242 \\
			
			494\_bus.mtx+IC&334&0.330287  &1448&0.079305\\
			
			bcsstk03.mtx+ILU&334&0.219232 &590&0.018062 \\
			
			bcspwr02.mtx+LU&92&0.025676&-&0.068686  \\
			\hline
		\end{tabular}
	\end{center}
\end{table}

It can be seen from the above Table \ref{tab6} that the solution time and solution speed after processing with PSLR preprocessor are greater than the solution time and solution speed of the conjugate gradient method, which shows the superiority of PSLR to a certain extent.

\textbf{2.Comparison of Pinv method and CG method.}
To demonstrate the applicability of the proposed method to general systems, we tested several sparse linear systems obtained from the Matrix Market database, with a brief description provided in the Table \ref{tab7}. We fixed $m = 5$ and $r_k = 15$ for all experiments. The numerical results are shown in the Table \ref{tab7}, indicating that the PSLR-GMRES method converges for all test cases. Moreover, the number of iterations and iteration time are smaller than those of the CG method.

\begin{table}[H]
	\begin{center}
		\caption{Comparison table of the effects of Pinv method and CG method}
  \label{tab7}
		\begin{tabular}{c|c|c|c|c}
			\hline
			\multicolumn{1}{c|}{\multirow{2}{*}{Matrix name}}&	\multicolumn{2}{c}{Pinv}	&\multicolumn{2}{|c}{CG}  \\
			\cline{2-5}
			&  n-iter& i-t  &n-iter&i-t \\
			\hline
			bcsstk03.mtx& 562  &0.835204  & 590  & 0.019754  \\
			
			494\_bus.mtx& 1138& 4.205660&1445&0.067710 \\
			\hline
		\end{tabular}
	\end{center}
\end{table}

The Table \ref{tab7}  clearly shows that the Pinv method outperforms CG method in terms of iteration time and number of iterations for certain matrices.

\textbf{3.Comparison of ADI and PSLR-GMRES method.}
We can get the ADI iteration format
\begin{equation}
	\begin{cases}
		(\mathcal{H}+\alpha\mathcal{I})x^{k+\frac{1}{2}}=(\alpha\mathcal{I}-\mathcal{S})x^k+b\\
		(\mathcal{S}+\alpha\mathcal{I})x^{k+1}=(\alpha\mathcal{I}-\mathcal{H})x^{k+\frac{1}{2}}+b\\
	\end{cases}.\end{equation}

In the initial half of the iterations, we employ the preprocessing conjugate gradient method (PCG) for the first iteration, while for the latter half, we utilize the GMRES iteration for the second iteration.

We implemented this iteration format using MATLAB and set the maximum number of iterations to 300, terminating the iteration when the error is less than $10^{-6}$. A comparison with PSLR-GMRES is shown in the following Table \ref{tab8}. We tested the three-diagonal matrix of order 128, the five-diagonal matrix of order 128, and the seven-diagonal matrix of order 256. The three-, five-, and seven-diagonal matrices are presented below.

$$
	\begin{bmatrix}
		6&2&0&\cdots&0\\
		2&6&2&\cdots&0\\
		0&2&6&\cdots&0\\
		0&0&2&\cdots&0\\
  \cdots&\cdots&\cdots&\cdots&\cdots\\
		0&0&0&\cdots&6\\
	\end{bmatrix},\begin{bmatrix}
		6&2&1&\cdots&0\\
		2&6&2&\cdots&0\\
		1&2&6&\cdots&0\\
		0&1&2&\cdots&0\\
  \cdots&\cdots&\cdots&\cdots&\cdots\\
		0&0&0&\cdots&6\\
	\end{bmatrix},\\
	\begin{bmatrix}
		6&2&1&0.5&\cdots&0\\
		2&6&2&1&\cdots&0\\
		1&2&6&2&\cdots&0\\
		0.5&1&2&6\cdots&0&0\\
		\cdots&\cdots&\cdots&\cdots&\cdots\\
		0&0&0&0&\cdots&6\\
	\end{bmatrix}.$$The parameters used in the ADI method are $\alpha=1.5$, and the right-hand vector is an all-one vector with the dimension of the matrix.

\begin{table}[H]
	\begin{center}
		\caption{Comparison table of the effects of PSLR-GMRES method and ADI method}
  \label{tab8}
		\begin{tabular}{c|c|c|c|c|c|c}
			\hline
			\multicolumn{1}{c}{\multirow{2}{*}{Matrix}}&\multicolumn{3}{|c}{PSLR-GMRES}	&\multicolumn{3}{|c}{ADI}  \\
			\cline{2-7}
			&i-t	& n-iter& error  &i-t&n-iter&error \\
			\hline
			Tridiagonal matrix&0.011&14&8.17e-06& 0.092 &18&8.76e-07\\
			
			Five diagonal matrix&0.029 &14&9.86e-07&0.054&12&9.12e-07\\
			
			Seven-diagonal matrix&0.002 &13&4.31e-07&0.093 &9&7.66e-07\\
			\hline
		\end{tabular}
	\end{center}
\end{table}
\newpage
For a better description, We plotted an error figure \ref{fig,Error trend graph}.
\begin{figure}[H]
	\centering
	\includegraphics[width=0.7\linewidth]{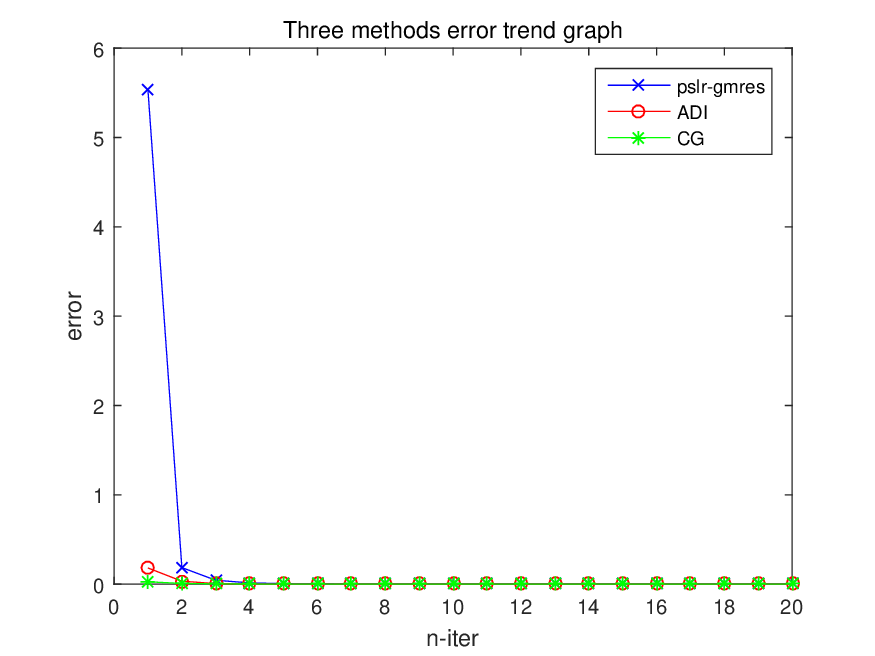}
	\caption{Three methods error trend chart}
	\label{fig,Error trend graph}
\end{figure}

From the Tables \ref{tab8} and Fig. \ref{fig,Error trend graph}, for triangular diagonal matrices, we can see that the PSLR-GMRES method has a better convergence rate than the ADI method in solving this equation. But with five-diagonal and seven-diagonal matrices, the PSLR-GMRES method is inferior to the ADI method.
\subsubsection{Comparison of different pretreatment methods}
We compared the PSLR-GMRES method with PCG method and Jacobi Preconditioning, and the results are shown in the Table \ref{tab20}, Table \ref{tab21} below. From the Table \ref{tab20}, Table \ref{tab21}, we can see that PSLR-GMRES has certain advantages in solving saddle point systems. The tridiagonal matrix involved in the numerical example is: $$\begin{bmatrix}
	2&0.5&0&\cdots&0\\
	-1&2&0.5&\cdots&0\\
	0&-1&2&\cdots&0\\
	\vdots&\vdots&\vdots&\cdots&\vdots\\
	0&0&0&\cdots&2\\
	\end{bmatrix}_{128\times128}.$$

\begin{table}[H]
	\begin{center}
		\caption{Comparison table of the effects of PSLR method and PCG method}
        	\label{tab20}
		\begin{tabular}{c|c|c|c|c|c|c}
			\hline
			\multicolumn{1}{c|}{\multirow{2}{*}{Matrix name}}&	\multicolumn{3}{c}{PSLR-GMRES}	&\multicolumn{3}{|c}{PCG} \\
			\cline{2-7}
			 &  n-iter& error&i-t&n-iter&error&i-t \\
			\hline
			
			494\_bus & 338&9.137512e-07&0.343641 &  3000&3.320152e-02 &0.080933\\
			
			tridiagonal matrix&17&6.233525e-07 &0.022151  &20&5.160190e-07&0.011246\\
			
			\hline
		\end{tabular}
	\end{center}
\end{table}
The preprocessing matrix of 494\_bus is $A$'s incomplete decomposition factor, and the preprocessing matrix of the other matrix is the matrix composed of its diagonal elements.

\begin{table}[H]
	\begin{center}
		\caption{Comparison table of the effects of PSLR method and Jacobi-pre method}
        	\label{tab21}
		\begin{tabular}{c|c|c|c|c|c|c}
			\hline
			\multicolumn{1}{c|}{\multirow{2}{*}{Matrix name}}&	\multicolumn{3}{c}{PSLR-GMRES}	&\multicolumn{3}{|c}{Jacobi-pre}  \\
			\cline{2-7}
			 &  n-iter& error&i-t &n-iter&error&i-t\\
			\hline
			
			tridiagonal matrix&17&6.233525e-07 &0.022151 &50&9.748978e-07 &0.004636\\
			
			dwb512&10&5.119408e-07 &0.001318 &11&5.900197e-07 &0.241068\\

			\hline
		\end{tabular}
	\end{center}
\end{table}
where Jacobi-pre stands for Jacobi Preconditioning, the preprocessing matrix in Jacobi-pre is the matrix composed of the reciprocal of the diagonal element of $A$.
\section{Conclusions}
\label{sec:conclusions}

The work of this paper is mainly to preprocess the saddle point system, and use the power series expansion to approximate the inverse matrix of the Schur complement matrix, so as to avoid the situation that the inverse matrix does not exist. At the same time, when solving the matrix equation system, the GMRES method can improve the rate of solving the saddle point system. Compared with other classical methods, our method needs to depend on specific problems and has certain limitations, so it needs to be further improved.

\section{Statements and Declarations}
No potential conflict of interest was reported by the authors.


\end{document}